\newtheorem{theo}{Theorem}
\newcounter{tmp}
\newtheorem{theorem}{Theorem}[section]
\newtheorem {lemma}[theorem]{Lemma}
\newtheorem{corollary}[theorem]{Corollary}
\newtheorem{prop}[theorem]{Proposition}
\theoremstyle{definition}
\newtheorem{definition}[theorem]{Definition}
\theoremstyle{remark}
\newtheorem  {remark}[theorem]{Remark}
\numberwithin{equation}{section}
\newcommand{\C}{\mathbb C}
\newcommand{\RS}{\widehat{\C}}
\newcommand{\D}{\Delta}
\newcommand{\T}{Teich\-m\"ul\-ler}
\newcommand{\hm}{hol\-o\-mor\-phic}
\begin{document}

\title[Monodromy, Lifting, And Holomorphic motions]{MONODROMY, LIFTINGS OF HOLOMORPHIC MAPS, AND
EXTENSIONS OF HOLOMORPHIC MOTIONS}

\author{Yunping Jiang}
\address[Jiang]{Department of Mathematics, Queens College of the City
University of New York, USA\\
and Department of Mathematics, The Graduate Center, CUNY, USA}
\email{yunping.jiang@qc.cuny.edu}

\author{Sudeb Mitra}
\address[Mitra]{Department of Mathematics, Queens College of the City University of New York, USA\\
and Department of Mathematics, The Graduate Center, CUNY, USA}
\email{sudeb.mitra@qc.cuny.edu}

\keywords{Teichm\"uller spaces, Holomorphic maps, Universal holomorphic motions} 
\subjclass[2010]{Primary 32G15,
Secondary 30C62, 30F60, 30F99.}

\thanks{This material is based upon work supported by the National Science Foundation. The first author is also partially supported by a
collaboration grant from the Simons Foundation (grant number 523341) and PSC-CUNY awards and a grant from
NSFC (grant number 11571122).}

\begin{abstract}
We study monodromy of holomorphic motions and show the equivalence
of triviality of monodromy of holomorphic motions and extensions of
holomorphic motions to continuous motions of the Riemann sphere. We also
study liftings of holomorphic maps into certain Teichm\"uller spaces. We use
this ``lifting property'' to prove that, under the condition of trivial monodromy,
any holomorphic motion of a closed set in the Riemann sphere, over a hyperbolic
Riemann surface, can be extended to a holomorphic motion of the sphere,
over the same parameter space. We conclude that this extension can be done
in a conformally natural way.
\end{abstract}

\maketitle

\section*{Introduction}
Throughout this paper, we shall use the following notations: $\C$ for the complex plane, $\RS = \C \cup \{\infty\}$ for the Riemann sphere, and $\Delta$ for the open unit disk $\{z \in \C: \vert z\vert < 1\}$. 

The subject of holomorphic motions was introduced in the study of the dynamics
of rational maps; see~\cite{MSS}. Since its inception, an important topic has been the
question of extending holomorphic motions. The papers~\cite{BR} and~\cite{ST} contained
partial results. Subsequently, Slodkowski showed that any holomorphic motion of a
set in $\RS$, over $\D$ as the parameter space, can be extended to a holomorphic motion
of $\RS$ over $\D$; see~\cite{Sl} and~\cite{Ch}. The paper~\cite{EM} used a group-equivariant version of Slodkowski's
theorem to prove results in Teichm\"uller theory. 

The main purpose of our paper is to study necessary and sufficient topological
conditions for extending holomorphic motions. We study monodromy of a holomorphic
motion $\phi$ of a finite set $E$ in $\RS$, defined over a connected complex Banach
manifold $V$ . We show the equivalence of the triviality of this monodromy and the
extendability of $\phi$ to a continuous motion of $\RS$ over $V$ . We then show that, if $V$ is
a hyperbolic Riemann surface, and $E$ is any set in $\RS$, then, under the condition of
trivial monodromy, any holomorphic motion of $E$ extends to a holomorphic motion
of $\RS$ over $V$ . The main technique is to study the liftings of holomorphic maps from
a hyperbolic Riemann surface into Teichm\"uller spaces of $\RS$ with punctures.

Our paper is organized as follows. In Section 1, we give all precise definitions and
discuss the useful facts that will be necessary in the proofs of the main theorems
of this paper. In Section 2, we give precise statements of the main theorems. In Sections
3, 4, 5, and 6, we prove the main theorems.

\section{Definitions and some facts}

\medskip
\begin{definition}~\label{hm} 
Let $V$ be a connected complex manifold with a basepoint $x_0$ and let $E$ be any subset of $\RS$. A {\it holomorphic motion} of $E$ over $V$ is a map $\phi: V \times E \to \RS$ that has the following three properties:
\begin{enumerate}
\item $\phi(x_0,z) = z$ for all $z$ in $E$,
\item the map $\phi(x,\cdot): E \to \RS$ is injective for each $x$ in $V$, and
\item the map $\phi(\cdot,z): V \to \RS$ is holomorphic for each $z$ in $E$.
\end{enumerate}
\end{definition}

We say that $V$ is a {\it parameter space} of the holomorphic motion $\phi$. We will always assume that $\phi$ is a {\it normalized} holomorphic motion; i.e. 0, 1, and $\infty$ belong to $E$ and are fixed points of the map $\phi(x, \cdot)$ for every $x$ in $V$. It is sometimes useful to write $\phi(x,z)$ as $\phi_x(z)$.

If $E$ is a proper subset of $\widehat E$ and $\phi: V \times E \to \RS$, $\widehat\phi: V \times \widehat E \to \RS$ are two holomorphic motions, we say that $\widehat\phi$ {\it extends}  $\phi$ if $\widehat\phi(x,z) = \phi(x, z)$ for all $(x ,z)$ in $V \times E$.
 
\medskip
\begin{remark}~\label{pb} 
Let $V$ and $W$ be connected complex manifolds with
basepoints, and $f$ be a basepoint preserving {\hm} map of
$W$ into $V$. If $\phi$ is a {\hm} motion of $E$ over $V$
its {\it pullback} by $f$ is the {\hm} motion
\begin{equation}\label{eq:pb}
f^*(\phi)(x,z) =\phi(f(x),z), \qquad \forall (x,z) \in W \times E,
\end{equation}
of $E$ over $W$.
\end{remark}

\medskip
\begin{definition}~\label{ge}
Let $V$ be a connected complex manifold with a basepoint. Let $G$ be a group of M\"obius transformations, let $E \subset \RS$ be $G$-invariant, which means, $g(E) = E$ for each $g$ in $G$. A holomorphic motion $\phi: V \times E \to \RS$ is {\bf $G$-equivariant} if for any $x \in V, g \in G$ there is a M\"{o}bius transformation, denoted by $\theta_{x}(g)$, such that 
$$ \phi(x, g(z)) = (\theta_x(g))(\phi(x,z))$$
for all $z$ in $E$.
\end{definition}

It is well-known that if $\phi: V \times E \to \RS$ is a {\hm} motion, where $V$ is a connected complex manifold with a basepoint $x_0$, then $\phi$ extends to a {\hm} motion of the closure $\overline{E}$, over $V$; see~\cite{MSS} and ~\cite{BJM}. Hence, throughout this paper, we will assume that $E$ is a closed set in $\RS$ (that contains the points $0$, $1$, and $\infty$). 

\medskip
Recall that a homeomorphism of $\RS$ is called {\it normalized} if it fixes the
points 0, 1, and $\infty$. The blanket assumption that $E$ is a closed set in $\RS$ containing the points $0$, $1$, and $\infty$ holds.

\medskip
\begin{definition}~\label{ts}
Two normalized quasiconformal self-mappings $f$ and $g$ of $\RS$
are said to be $E$-equivalent if and only if $f^{-1} \circ g$ is
isotopic to the identity rel $E$. The {\it Teichm\"uller space}
$T(E)$ is the set of all $E$-equivalence classes of normalized
quasiconformal self-mappings of $\RS$.
\end{definition}

Let $M(\C)$ be the open unit ball of the complex Banach space
$L^{\infty}(\C)$. Each $\mu$ in $M(\C)$ is the Beltrami coefficient
of a unique normalized quasiconformal homeomorphism $w^{\mu}$ of
$\RS$ onto itself. The basepoint of $M(\C)$ is the zero function.

We define the quotient map
$$
P_E: M(\C) \to T(E)
$$
by setting $P_E(\mu)$ equal to the
$E$-equivalence class of $w^{\mu}$, written as $[w^{\mu}]_E$.
Clearly, $P_E$ maps the basepoint of $M(\C)$ to the basepoint of
$T(E)$.

In his doctoral dissertation~\cite{L}, G.~Lieb proved that $T(E)$
is a complex Banach manifold such that the projection map $P_E: M(\C)
\to T(E)$ is a holomorphic split submersion; see \cite{EM} for the details. 

\medskip
\begin{remark} 
Let $E$ be a finite set. Its complement $\Omega= \RS \setminus E$ is the Riemann
sphere with punctures at the points of $E$. Then, $T(E)$ is biholomorphic to the classical Teichm\"uller space $Teich(\Omega)$; see Example 3.1 in \cite{M1} for the proof. This canonical identification will be very important in our paper. 
\end{remark}

\medskip
\begin{prop}~\label{edes}  
There is a continuous basepoint preserving map $s$ from $T(E)$ to $M(\C)$ such that $P_E \circ s$ is the identity map on $T(E)$.  \end{prop}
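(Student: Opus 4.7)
The plan is to construct $s$ as the composition of a canonical restriction map $r:T(E)\to\mathrm{Emb}(E,\RS)$ with a conformally natural extension operator $H$ sending an embedding of $E$ back to a normalized quasiconformal self-map of $\RS$; then set $s(\tau):=\mu_{H(r(\tau))}$, the Beltrami coefficient of $H(r(\tau))$.

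The restriction map $r$ is defined by $r([f]_E):=f|_E$. This is well defined because, by Definition~\ref{ts}, $[f]_E=[g]_E$ means $f^{-1}\circ g$ is isotopic to the identity rel $E$, forcing $f|_E=g|_E$. The map $r$ is basepoint-preserving and continuous in the uniform topology on embeddings of $E$, since sequences convergent in the Teichm\"uller topology can be represented by normalized quasiconformal maps that converge uniformly on $\RS$.

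For the extension $H$, I would use a conformally natural extension operator of Douady--Earle type. Since $E$ contains $\{0,1,\infty\}$, the complementary set $\Omega=\RS\setminus E$ is hyperbolic; on each connected component of $\Omega$, uniformized by the disk, the Douady--Earle barycentric extension of the prescribed boundary values of $h=r(\tau)$ produces a canonical quasiconformal homeomorphism onto its image, varying continuously with the boundary data. Pasting these componentwise extensions with $h$ itself on $E$ yields $H(h)$, a normalized quasiconformal self-map of $\RS$ satisfying $H(h)|_E=h$. Consequently $P_E(s(\tau))=[H(r(\tau))]_E=\tau$, and the basepoint of $T(E)$ goes to $0\in M(\C)$.

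The main obstacle will be upgrading continuity of $s$ from uniform convergence of the underlying homeomorphisms to continuity in the $L^\infty$ norm on $M(\C)$, which is what makes $s$ land continuously in the Banach manifold structure. To control the Beltrami coefficient, I would combine the global Douady--Earle construction with the local holomorphic sections supplied by the split submersion structure of $P_E$ (Lieb's theorem; see~\cite{EM}): such local sections exist on a neighborhood of any prescribed $\tau_0\in T(E)$, and patching them via a partition of unity on the paracompact Banach manifold $T(E)$ yields a globally continuous section. A delicate point in this patching is that fibers of $P_E$ are not linear, so the interpolation between local sections must be carried out along isotopies rel~$E$ rather than by linear combinations in $L^\infty(\C)$; the Douady--Earle formalism is precisely what supplies the required canonical interpolations in each fiber.
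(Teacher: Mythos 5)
There is a genuine gap at the first step, and it is fatal to the whole construction. Your section is defined as $s(\tau)=\mu_{H(r(\tau))}$, so it factors through the restriction map $r([f]_E)=f|_E$. But $r$ is very far from injective: a point of $T(E)$ records not only the values of a representative on $E$ but also its isotopy class rel $E$ on the complement $\Omega=\RS\setminus E$. For a finite set $E_n$ with $n+3$ points this is exactly Nag's theorem, quoted as Theorem~\ref{nag} in the paper: the map $[w^{\mu}]_{E_n}\mapsto (w^{\mu}(\zeta_1),\dots,w^{\mu}(\zeta_n))$ is a holomorphic \emph{universal covering} of $Y_n$, with infinite fibers permuted by the mapping class group of the punctured sphere (this monodromy is the subject of the entire paper). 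Consequently $[H(r(\tau))]_E$ depends only on $r(\tau)$ and cannot equal $\tau$ for all $\tau$; at best you have produced a section of $p_n\circ P_E$, not of $P_E$. A second, related defect: for finite $E$ the complement $\Omega$ is a connected punctured sphere, the points of $E$ are punctures rather than boundary circles of components of $\Omega$, so there is no circle homeomorphism on which to perform a componentwise barycentric extension of ``the boundary values $h=r(\tau)$.'' The actual Douady--Earle section (Lemma 5 of \cite{DE}, generalized in \cite{EM} and \cite{JM}) takes as input the full equivalence class: one lifts $w^{\mu}|_{\Omega}$ to the universal covers of $\Omega$ and of $w^{\mu}(\Omega)$, records the induced boundary homeomorphism of the unit circle (which conjugates the two Fuchsian groups and depends only on $[w^{\mu}]_E$), barycentrically extends \emph{that} circle map, and descends by conformal naturality. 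It is precisely this boundary correspondence of the lift---absent from your construction---that distinguishes points lying in the same fiber of $r$.

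Your fallback of patching Lieb's local holomorphic sections by a partition of unity does not close the gap either, and you essentially concede this yourself: $P_E$ is not linear, its fibers in $M(\C)$ are not convex, so a pointwise convex combination of local sections need not project back to the correct point of $T(E)$. Saying that ``the Douady--Earle formalism supplies the required canonical interpolations in each fiber'' is circular, because a canonical, continuously varying choice of representative in each fiber is exactly the content of the proposition being proved. The paper itself gives no argument and refers to \cite{EM} and \cite{JM}; the construction there is the lifted-boundary-values one described above, not an extension of the values on $E$.
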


See~\cite{EM},~\cite{JM} for all details. 

\medskip
\begin{definition}~\label{des}
The map $s$ from $T(E)$ to $M(\C)$ is called the {\it Douady-Earle section} of $P_E$ for the {\T} space $T(E)$. 
\end{definition}

\medskip
\begin{remark}~\label{sec}
When $E$ is finite, $T(E)$ is canonically identified with the classical {\T} space $Teich(\RS \setminus E)$, and hence $s$ is the continuous section studied in Lemma 5 in \cite{DE}.
\end{remark}

\medskip
\begin{definition}~\label{uhm}  
The {\it universal holomorphic motion} $\Psi_E: T(E) \times E \to \RS$ is defined as follows:
$$
\Psi_E(P_E(\mu),z) = w^{\mu}(z) \hbox{ for $\mu \in M(\C)$ and $z \in E$}.
$$
\end{definition}

It is clear from the definition of $P_E$ that the map $\Psi_E$ is
well-defined. It is a holomorphic motion because $P_E$ is a
holomorphic split submersion and $\mu \mapsto w^{\mu}(z)$ is a
holomorphic map from $M(\C)$ to $\RS$ for every fixed $z$ in
$\RS$, by Theorem 11 in \cite{AB}. 

This holomorphic motion is ``universal" in the following sense:

\medskip
\begin{theorem}~\label{univ}
Let $\phi: V \times E \to \RS$ be a holomorphic
motion. If $V$ is a simply connected complex Banach manifold with a
basepoint, there is a unique basepoint preserving holomorphic map $f:
V \to T(E)$ such that $f^*(\Psi_E) = \phi$. \end{theorem}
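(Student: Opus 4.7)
Plan: My plan is to prove existence first for finite sets $E$ via a covering-space argument, and then extend to general closed $E$ by approximating by finite subsets. For finite $E=\{0,1,\infty,z_1,\ldots,z_n\}$, I would invoke Remark \ref{sec} to identify $T(E)$ with $Teich(\RS\setminus E)$, and then use the classical fact that this Teichm\"uller space is the universal cover of the configuration space
$F=\{(w_1,\ldots,w_n)\in (\RS\setminus\{0,1,\infty\})^n : w_i\neq w_j \text{ for } i\neq j\}$, with holomorphic covering projection $p(\tau)=(\Psi_E(\tau,z_1),\ldots,\Psi_E(\tau,z_n))$. The motion $\phi$ defines a basepoint-preserving holomorphic map $\widehat\phi:V \to F$ by $\widehat\phi(x)=(\phi(x,z_1),\ldots,\phi(x,z_n))$, and since $V$ is simply connected this lifts uniquely to a basepoint-preserving continuous map $f:V\to T(E)$ with $p\circ f=\widehat\phi$; the lift is automatically holomorphic because $p$ is a local biholomorphism. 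Then $f^{*}(\Psi_E)(x,z_i)=p(f(x))_i=\phi(x,z_i)$, with equality on $\{0,1,\infty\}$ automatic from the normalization, and uniqueness follows from uniqueness of covering-space lifts.

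For general closed $E$, I would choose a dense sequence $\{z_1,z_2,\ldots\}\subset E\setminus\{0,1,\infty\}$ and set $E_n=\{0,1,\infty,z_1,\ldots,z_n\}$; the finite case supplies unique basepoint-preserving holomorphic $f_n:V\to T(E_n)$ with $f_n^{*}(\Psi_{E_n})=\phi|_{V\times E_n}$. The inclusions $E_n\subset E_{n+1}\subset E$ induce holomorphic forgetful projections $\pi_{n+1,n}:T(E_{n+1})\to T(E_n)$ and $\pi_n:T(E)\to T(E_n)$, and by uniqueness in the finite case the $f_n$ form a compatible tower under the $\pi_{n+1,n}$. To assemble these into $f:V\to T(E)$, I would use the continuous Douady-Earle section $s:T(E)\to M(\C)$ of Proposition \ref{edes} to extract, for each $x\in V$, a Beltrami coefficient $\mu_x\in M(\C)$ satisfying $P_{E_n}(\mu_x)=f_n(x)$ for every $n$, and set $f(x):=P_E(\mu_x)$. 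Holomorphy of $f$ will follow from local holomorphic sections of the split submersion $P_E$ combined with the holomorphy of each $f_n$; the relation $f^{*}(\Psi_E)=\phi$ is automatic on $\bigcup E_n$ and extends to all of $E$ by continuity; uniqueness holds because any competitor $g$ projects via $\pi_n$ to a map that shares the defining property of $f_n$, hence coincides with it for every $n$, and density of $\bigcup E_n$ in $E$ forces $g=f$.

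The hardest step will be the assembly in the infinite case: bootstrapping from a compatible tower $\{f_n\}$ of holomorphic maps into the finite-dimensional approximations up to a single holomorphic (not merely continuous) map into $T(E)$, using the interplay between the continuous Douady-Earle section of Proposition \ref{edes} and the holomorphic split submersion structure of $P_E$ to ensure that the limiting Beltrami coefficient $\mu_x$ varies holomorphically with $x$.
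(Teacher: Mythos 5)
The paper does not actually prove Theorem~\ref{univ}; it cites Section 14 of \cite{M1}, so there is no in-paper argument to compare against line by line. Your finite case is correct and is the standard route: Nag's theorem (quoted later in this paper as the covering $p_n\colon T(E_n)\to Y_n$) exhibits $T(E_n)$ as the universal cover of the configuration space, the motion gives a basepoint-preserving holomorphic map $\widehat\phi\colon V\to Y_n$, and the unique continuous lift to $T(E_n)$ is automatically holomorphic and satisfies $f^*(\Psi_{E_n})=\phi$. That part I would accept as written.

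The general closed case is where your proposal has a genuine gap, and it is not merely a technical one. First, the use of the Douady--Earle section $s\colon T(E)\to M(\C)$ to ``extract'' $\mu_x$ is circular: $s$ consumes a point of $T(E)$, which is exactly what you are trying to construct. If instead you mean the sections $s_{E_n}\colon T(E_n)\to M(\C)$ and a limit of $\mu_{n,x}:=s_{E_n}(f_n(x))$, you face two unaddressed problems: there is no a priori bound on $\|\mu_{n,x}\|_\infty$ away from $1$ as $n\to\infty$ (a uniform dilatation bound is essentially equivalent to the nontrivial fact that $\phi_x|_E$ admits a quasiconformal extension; it can be obtained, e.g., from the contraction of Kobayashi/Teichm\"uller metrics under the holomorphic maps $f_n$, but you do not argue it), and weak-$*$ limits of Beltrami coefficients do not control the limits of the corresponding quasiconformal maps, let alone their isotopy classes rel $E_n$. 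Second, even granting a $\mu_x$ with $P_{E_n}(\mu_x)=f_n(x)$ for all $n$, setting $f(x):=P_E(\mu_x)$ requires knowing that the point of $T(E)$ so obtained is independent of the choice of $\mu_x$, i.e., that the natural map $T(E)\to\varprojlim T(E_n)$ is injective; your uniqueness argument (``density of $\bigcup E_n$ forces $g=f$'') relies on the same unproved injectivity. (Uniqueness itself is better handled by Lemma~\ref{same}, which is available in the paper; but that lemma does not rescue the existence or well-definedness of $f$.) Finally, holomorphy of $x\mapsto\mu_x$ cannot come from ``local holomorphic sections of $P_E$,'' since those again presuppose a map into $T(E)$. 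In short, the reduction to finite subsets is a sensible skeleton, but the assembly step --- which you yourself identify as the hardest --- is precisely the content of the theorem for infinite $E$, and your sketch does not supply it.
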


For a proof see Section 14 in \cite{M1}.

\medskip
\begin{remark}~\label{qcrep}
Let $\phi: V \times E \to \RS$ be a {\hm} motion where $V$ is a connected complex Banach manifold with a basepoint $x_0$. 
Suppose there exists a basepoint preserving {\hm} map $f: V \to T(E)$ such that $f^*(\Psi_E) = \phi$. 
Let $\widetilde f: V \to M(\C)$ where $\widetilde f = s \circ f$. 
By Proposition~\ref{edes}, $\widetilde f$ is a basepoint preserving continuous map. Then, for all $(x,z) \in V \times E$, we have 
$$
\phi(x,z) = \Psi_E(f(x), z) = \Psi_E(P_E(s(f(x))),z) = w^{s(f(x))}(z) = w^{\widetilde f(x)}(z).
$$ 
\end{remark}

\medskip
\begin{definition}~\label{cm}
Let $W$ be a path-connected Hausdorff space with a basepoint $x_0$. A (normalized) continuous motion of $\RS$ over $W$ is a continuous map $\phi: W \times \RS \to \RS$ such that:
\begin{itemize}
\item[(a)] $\phi(x_0,z) = z$ for all $z \in \RS$, and 
\item[(b)] for each $x$ in $W$, the map $\phi(x,\cdot): = \phi_x(\cdot)$ is a homeomorphism of $\RS$ onto itself that fixes the points $0$, $1$, and $\infty$. 
\end{itemize}
\end{definition}

In \cite{M2} it was shown that:

\medskip
\begin{theorem}~\label{ceq}
Let $\phi: V \times E \to \RS$ be a {\hm} motion where $V$ is a connected complex Banach manifold with a basepoint $x_0$. Then the following are equivalent:
\begin{itemize}
\item[(i)] There is a continuous motion $\widetilde\phi: V \times \RS \to \RS$ that extends $\phi$.
\item[(ii)] There exists a basepoint preserving {\hm} map $F: V \to T(E)$ such that $F^*(\Psi_E) = \phi$. 
\end{itemize}
\end{theorem}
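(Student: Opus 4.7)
The plan is to prove the two implications separately. The direction (ii) $\Rightarrow$ (i) is the easier one: given a basepoint preserving {\hm} map $F:V\to T(E)$, I would post-compose with the Douady--Earle section $s$ of Proposition~\ref{edes} to produce a continuous basepoint preserving map $\widetilde f=s\circ F:V\to M(\C)$, and then set
\[
\widetilde\phi(x,z):=w^{\widetilde f(x)}(z), \qquad (x,z)\in V\times\RS.
\]
Joint continuity of $\widetilde\phi$ follows from the continuity of $\widetilde f$ together with the classical Ahlfors--Bers fact that $(\mu,z)\mapsto w^{\mu}(z)$ is continuous from $M(\C)\times\RS$ onto $\RS$; that each $\widetilde\phi_x$ is a normalized homeomorphism of $\RS$ and that $\widetilde\phi$ agrees with $\phi$ on $V\times E$ is already recorded in Remark~\ref{qcrep}.

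For (i) $\Rightarrow$ (ii) my strategy is to pass to the universal cover $\pi:\widetilde V\to V$ with basepoint $\widetilde x_{0}$ above $x_{0}$, pull back $\phi$ to a {\hm} motion $\pi^{*}\phi$ over the simply connected space $\widetilde V$, and invoke Theorem~\ref{univ} to obtain a basepoint preserving {\hm} map $\widetilde F:\widetilde V\to T(E)$ with $\widetilde F^{*}(\Psi_{E})=\pi^{*}\phi$. The real work is to verify that $\widetilde F$ is constant on the fibres of $\pi$; once this descent is established the induced map $F:V\to T(E)$ automatically inherits the basepoint preserving and pull-back properties, and is holomorphic because $\pi$ is a local biholomorphism.

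To prove descent I would combine $\widetilde F$ with the Douady--Earle section by setting $\widetilde s=s\circ\widetilde F:\widetilde V\to M(\C)$, so that the normalized quasiconformal maps $f_{\widetilde x}:=w^{\widetilde s(\widetilde x)}$ depend continuously on $\widetilde x$ and, by Remark~\ref{qcrep}, satisfy $f_{\widetilde x}|_{E}=\phi_{\pi(\widetilde x)}|_{E}$. Given $\widetilde x_{1},\widetilde x_{2}\in\widetilde V$ with common image $x=\pi(\widetilde x_{1})=\pi(\widetilde x_{2})$, I would choose a path $\widetilde\gamma$ in $\widetilde V$ from $\widetilde x_{1}$ to $\widetilde x_{2}$, set $\gamma=\pi\circ\widetilde\gamma$, and consider the continuous family of self-homeomorphisms of $\RS$
\[
J_{t} \;=\; \widetilde\phi_{x}\circ(\widetilde\phi_{\gamma(t)})^{-1}\circ f_{\widetilde\gamma(t)},\qquad t\in[0,1].
\]
The critical computation is that for every $z\in E$ one has $f_{\widetilde\gamma(t)}(z)=\phi_{\gamma(t)}(z)=\widetilde\phi_{\gamma(t)}(z)$, whence $J_{t}(z)=\widetilde\phi_{x}(z)=\phi_{x}(z)$ is independent of $t$, while $J_{0}=f_{\widetilde x_{1}}$ and $J_{1}=f_{\widetilde x_{2}}$. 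Consequently $K_{t}=f_{\widetilde x_{1}}^{-1}\circ J_{t}$ is an isotopy rel $E$ from the identity to $f_{\widetilde x_{1}}^{-1}\circ f_{\widetilde x_{2}}$, so that $[f_{\widetilde x_{1}}]_{E}=[f_{\widetilde x_{2}}]_{E}$ and $\widetilde F(\widetilde x_{1})=\widetilde F(\widetilde x_{2})$.

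I expect the descent step to be the main conceptual obstacle: it is precisely here that one has to blend the holomorphic data carried by $\widetilde F$ with the purely topological extension $\widetilde\phi$ supplied by hypothesis~(i) in order to produce an isotopy in the strict sense of Definition~\ref{ts}. The Douady--Earle section is essential both for pinning down a canonical quasiconformal representative $f_{\widetilde x}$ and for guaranteeing joint continuity of $J_{t}$ in $(t,z)$; the standard conjugation $K_{t}=f_{\widetilde x_{1}}^{-1}\circ J_{t}$ is needed to convert an isotopy whose values on $E$ are only constant in $t$ into one that fixes $E$ pointwise. The remaining verifications---holomorphy of $F$, the basepoint condition, and $F^{*}(\Psi_{E})=\phi$---are then straightforward consequences of the corresponding properties of $\widetilde F$.
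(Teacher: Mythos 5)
Your proof is correct. Note first that the paper does not actually prove Theorem~\ref{ceq}; it is quoted from \cite{M2}, so the comparison is with that proof, whose ingredients the paper reproduces as Corollary~\ref{conbel} and Lemma~\ref{same}. Your direction (ii)$\Rightarrow$(i) is the standard one. For (i)$\Rightarrow$(ii), the route of \cite{M2} is to first upgrade $\widetilde\phi$ via Corollary~\ref{conbel} so that each $\widetilde\phi_x$ is quasiconformal with Beltrami coefficient $\mu_x$ depending continuously on $x$, to define $F(x)=P_E(\mu_x)$ directly on $V$, and then to identify $F$ with the local holomorphic lifts supplied by Theorem~\ref{univ} by means of the uniqueness statement in Lemma~\ref{same}, from which holomorphy of $F$ is inherited. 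You instead construct the holomorphic map upstairs on the universal cover and prove $\Gamma$-invariance by the explicit isotopy $K_t=f_{\widetilde x_1}^{-1}\circ\widetilde\phi_x\circ(\widetilde\phi_{\gamma(t)})^{-1}\circ f_{\widetilde\gamma(t)}$ rel $E$. This is a genuine and attractive variant: it bypasses Corollary~\ref{conbel} entirely (only the topological extension is used, with no quasiconformal upgrade), and it makes visible exactly where hypothesis (i) enters, namely in killing the monodromy --- your computation with $\widetilde x_1=\widetilde x_0$ and $\widetilde x_2=\gamma(\widetilde x_0)$ is precisely the implication (i)$\Rightarrow$(ii) of Theorem~\ref{thmA}. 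The one step you should justify explicitly is the joint continuity of $(t,w)\mapsto(\widetilde\phi_{\gamma(t)})^{-1}(w)$, which is needed for $J_t$ to be an isotopy in $(t,z)$; it is not literally part of Definition~\ref{cm}, but follows from compactness of $\RS$ (a jointly continuous family of homeomorphisms of a compact metric space has a jointly continuous family of inverses).
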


The following corollary will be useful in this paper; see~\cite{M2}. 

\medskip
\begin{corollary}~\label{conbel}
If the {\hm} motion $\phi$ can be extended to a continuous motion $\widetilde\phi$, then $\widetilde\phi$ can be chosen so that:
\begin{itemize}
\item[(i)] the map $\widetilde\phi_x: \RS \to \RS$ is quasiconformal for each $x$ in $V$, and 
\item[(ii)] its Beltrami coefficient $\mu_x$ is a continuous function of $x$. 
\end{itemize}
\end{corollary}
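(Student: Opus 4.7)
\medskip
\noindent\textbf{Proof proposal.} The plan is to use Theorem~\ref{ceq} to produce an intrinsic candidate for $\widetilde\phi$. Since $\phi$ extends to a continuous motion, the equivalence $(i)\Leftrightarrow(ii)$ of Theorem~\ref{ceq} furnishes a basepoint preserving holomorphic map $F: V \to T(E)$ with $F^*(\Psi_E)=\phi$. I would then invoke Proposition~\ref{edes} to choose the Douady--Earle section $s: T(E)\to M(\C)$, and set
$$
\widetilde f := s\circ F : V \longrightarrow M(\C).
$$
Because $F$ is holomorphic (in particular continuous) and $s$ is continuous and basepoint preserving, $\widetilde f$ is continuous and sends $x_0$ to $0\in M(\C)$.

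Next I would \emph{define} the candidate extension by
$$
\widetilde\phi(x,z) := w^{\widetilde f(x)}(z), \qquad (x,z)\in V\times \RS,
$$
and verify in turn the required properties. That $\widetilde\phi$ restricted to $V\times E$ equals $\phi$ is precisely the content of Remark~\ref{qcrep}, since $P_E\circ s =\mathrm{id}$ gives $\Psi_E(F(x),z)=w^{s(F(x))}(z)=w^{\widetilde f(x)}(z)$ for $z\in E$. Property~(i) of the corollary is immediate from the construction: $\widetilde\phi_x = w^{\widetilde f(x)}$ is a normalized quasiconformal self-map of $\RS$. Property~(ii) also holds by construction, with Beltrami coefficient $\mu_x := \widetilde f(x) = s(F(x))$, which is a continuous function of $x$ as the composition of continuous maps.

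It remains to confirm that $\widetilde\phi$ is actually a continuous motion in the sense of Definition~\ref{cm}. Conditions~(a) and~(b) hold because $\widetilde f(x_0)=0$ gives $w^{0}=\mathrm{id}$ and each $w^{\mu_x}$ is a normalized homeomorphism of $\RS$. The main (and only non-formal) point is the joint continuity of $(x,z)\mapsto w^{\mu_x}(z)$. I expect this to be the step requiring the most care, but it is a standard consequence of the Ahlfors--Bers measurable Riemann mapping theorem: the map $M(\C)\ni \mu \mapsto w^{\mu}\in C(\RS,\RS)$ is continuous (indeed holomorphic) in $\mu$ for each fixed $z$, and the family $\{w^{\mu_x}\}_{x\in V}$ has locally uniformly bounded dilatation (since $\|\widetilde f(x)\|_\infty$ is locally bounded away from $1$ by continuity on $V$), so equicontinuity on $\RS$ upgrades pointwise continuity in $x$ to joint continuity in $(x,z)$. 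Composing continuity of $x\mapsto \mu_x$ with continuity of $\mu\mapsto w^{\mu}(z)$ then yields the required continuity of $\widetilde\phi$, completing the proof.
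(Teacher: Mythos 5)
Your proposal is correct and is essentially the argument the paper has in mind: the paper defers the proof to \cite{M2}, where the corollary is obtained exactly this way, by applying Theorem~\ref{ceq} to get the holomorphic map $F:V\to T(E)$ and then composing with the Douady--Earle section $s$ so that $\mu_x=s(F(x))$ depends continuously on $x$. Your added verification of joint continuity of $(x,z)\mapsto w^{\mu_x}(z)$ via Ahlfors--Bers plus local equicontinuity is a correct filling-in of a step the sources treat as standard.
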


Let $w$ be a normalized quasiconformal self-mapping of $\RS$, and let $\widetilde E = w(E)$. By definition, the {\it allowable map} $g$ from $T(\widetilde E)$ to $T(E)$ maps the $\widetilde E$-equivalence class of $f$ to the $E$-equivalence class of $f\circ w$ 
for every normalized quasiconformal self-mapping $f$ of $\RS$.  

\medskip
\begin{lemma} 
The allowable map $g: T(\widetilde E) \to T(E)$ is biholomorphic. If $\mu$ is the Beltrami coefficient of $w$, then $g$ maps the basepoint of $T(\widetilde E)$ to the point $P_E(\mu)$ in $T(E)$. 
\end{lemma}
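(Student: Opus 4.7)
The plan is to realize the allowable map $g$ as the descent of an explicit \hm\ map $R: M(\C) \to M(\C)$ to the \T\ quotients, using that $P_E$ and $P_{\widetilde E}$ are \hm\ split submersions.

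First I would verify well-definedness and bijectivity. If $f_1$ and $f_2$ represent the same point of $T(\widetilde E)$, then $f_1^{-1} \circ f_2$ is isotopic to the identity rel $\widetilde E = w(E)$; conjugating that isotopy by $w$ yields an isotopy from $(f_1 \circ w)^{-1} \circ (f_2 \circ w) = w^{-1}\circ f_1^{-1}\circ f_2 \circ w$ to the identity rel $E$, so $[f_1\circ w]_E = [f_2\circ w]_E$. The inverse of $g$ is the allowable map $g': T(E) \to T(\widetilde E)$ associated with the normalized quasiconformal map $w^{-1}$ (note $w^{-1}(\widetilde E)=E$), sending $[h]_E$ to $[h\circ w^{-1}]_{\widetilde E}$; direct computation shows $g\circ g'$ and $g'\circ g$ are the identities.

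Next I would lift to Beltrami coefficients. Let $\mu$ be the Beltrami coefficient of $w$ and set $\theta(z) = \overline{w_z(z)}/w_z(z)$. By the chain rule for Wirtinger derivatives, the Beltrami coefficient of $w^\nu \circ w$ is
\begin{equation*}
R(\nu)(z) \;=\; \frac{\mu(z) + (\nu\circ w)(z)\,\theta(z)}{1 + \overline{\mu(z)}\,(\nu\circ w)(z)\,\theta(z)}.
\end{equation*}
Since only $\nu$, never $\overline{\nu}$, appears on the right, this defines a \hm\ map $R: M(\C) \to M(\C)$; the analogous construction with $w^{-1}$ in place of $w$ furnishes a \hm\ inverse of $R$. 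By the definitions of $g$ and $R$ we have $P_E \circ R = g \circ P_{\widetilde E}$, and $R(0)=\mu$, so the basepoint $P_{\widetilde E}(0)$ of $T(\widetilde E)$ maps to $P_E(\mu)$, as asserted.

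Finally, since $P_{\widetilde E}$ is a \hm\ split submersion, every point of $T(\widetilde E)$ has a neighborhood $U$ on which there is a local \hm\ section $\sigma$ of $P_{\widetilde E}$. On $U$ we have $g = g \circ P_{\widetilde E} \circ \sigma = P_E \circ R \circ \sigma$, which is \hm; applying the same argument to $g'$ shows $g^{-1}$ is \hm, so $g$ is biholomorphic. The main technical point is writing down the composition formula for $R$ and observing its \hm\ dependence on $\nu$; everything else is a routine check of equivalence classes and a standard invocation of the split-submersion property.
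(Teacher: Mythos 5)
Your proposal is correct and follows essentially the same route as the proof the paper cites (in \S7.12 of Earle--Mitra and \S6.4 of Mitra): descend the holomorphic self-map $R$ of $M(\C)$ given by the Beltrami composition formula through the split submersions $P_{\widetilde E}$ and $P_E$, using local holomorphic sections, and read off the basepoint image from $R(0)=\mu$. The only steps worth making explicit are that $w^{R(\nu)}=w^{\nu}\circ w$ by uniqueness of normalized solutions of the Beltrami equation (which is what gives $P_E\circ R=g\circ P_{\widetilde E}$), and that holomorphy of $R$ combines the linear isometry $\nu\mapsto\nu\circ w$ with the pointwise M\"obius map of the unit ball of $L^{\infty}(\C)$.
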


See \S7.12 in \cite{EM} or \S6.4 in \cite{M1} for a complete proof. 
The following lemma will be useful in our paper.

\medskip
\begin{lemma}~\label{same} 
Let $B$ be a path-connected topological space. Let $f$ and $g$ be two continuous maps from $B$ to $T(E)$, satisfying:
\begin{itemize}
\item[(i)] $\Psi_E(f(t),z) = \Psi_E(g(t),z)$ for all $z$ in $E$, and
\item[(ii)] $f(t_0) = g(t_0)$ for some $t_0$.
\end{itemize}
Then, $f(t) = g(t)$ for all $t$ in $B$. 
\end{lemma}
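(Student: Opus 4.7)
My plan is to reduce the equality $f(t)=g(t)$ in $T(E)$ to the existence of an isotopy rel $E$, and then to manufacture that isotopy from the path in $B$ itself. First, I would use Proposition~\ref{edes} to obtain continuous lifts $\widetilde f = s\circ f$ and $\widetilde g = s\circ g$ from $B$ into $M(\C)$. By Remark~\ref{qcrep} (or more directly by the definition of $\Psi_E$), $\Psi_E(f(t),z)=w^{\widetilde f(t)}(z)$ and $\Psi_E(g(t),z)=w^{\widetilde g(t)}(z)$, so hypothesis (i) becomes
\begin{equation*}
w^{\widetilde f(t)}(z)=w^{\widetilde g(t)}(z)\qquad \text{for every } z\in E,\ t\in B.
\end{equation*}

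Now I would set
\begin{equation*}
H_t := \bigl(w^{\widetilde f(t)}\bigr)^{-1}\circ w^{\widetilde g(t)},
\end{equation*}
which is a normalized quasiconformal self-map of $\RS$ fixing every point of $E$ (by injectivity of $w^{\widetilde f(t)}$ together with the displayed equality). By definition of the Teichm\"uller equivalence relation, $f(t)=g(t)$ if and only if $H_t$ is isotopic to the identity rel $E$. The hypothesis $f(t_0)=g(t_0)$ provides such an isotopy $I_0:[0,1]\times\RS\to\RS$ from $\mathrm{id}$ to $H_{t_0}$ rel $E$.

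Fix $t_1\in B$ and choose a path $\gamma:[0,1]\to B$ with $\gamma(0)=t_0$ and $\gamma(1)=t_1$. The central observation is that the assignment $t\mapsto w^{\widetilde f(t)}$, and likewise $t\mapsto (w^{\widetilde f(t)})^{-1}$ and $t\mapsto w^{\widetilde g(t)}$, are continuous from $B$ into the space of normalized homeomorphisms of $\RS$ in the uniform topology; this is the Ahlfors--Bers continuous dependence applied to the continuous lifts $\widetilde f,\widetilde g$. Composition is continuous in the uniform topology, so $t\mapsto H_t$ is continuous into $C(\RS,\RS)$ with the sup norm, whence the evaluation map
\begin{equation*}
I_1:[0,1]\times\RS\to\RS,\qquad I_1(s,z):=H_{\gamma(s)}(z),
\end{equation*}
is \emph{jointly} continuous. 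Since each $H_{\gamma(s)}$ is a homeomorphism of $\RS$ fixing $E$ pointwise, $I_1$ is an isotopy from $H_{t_0}$ to $H_{t_1}$ rel $E$.

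Finally, concatenating $I_0$ with $I_1$ yields an isotopy from $\mathrm{id}$ to $H_{t_1}$ rel $E$, so $f(t_1)=g(t_1)$ in $T(E)$. Since $t_1\in B$ was arbitrary, this proves the lemma. The only place requiring care is the verification that $(s,z)\mapsto H_{\gamma(s)}(z)$ is \emph{jointly} continuous; this is the main obstacle and is handled by passing through the uniform topology on self-maps of $\RS$, using that pointwise-uniform convergence of equicontinuous maps on a compact space gives joint continuity.
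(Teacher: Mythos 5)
Your argument is correct, and it is essentially the standard proof that the paper points to (the paper itself only cites \S12 of \cite{M1} rather than reproducing it): pass to continuous Beltrami lifts via the Douady--Earle section, form $H_t=(w^{\widetilde f(t)})^{-1}\circ w^{\widetilde g(t)}$, which fixes $E$ pointwise by hypothesis (i), and observe that along a path from $t_0$ the family $H_{\gamma(s)}$ is itself a (uniformly quasiconformal) isotopy rel $E$, so triviality of the class at $t_0$ propagates to every $t$. You have also correctly isolated the one genuinely technical point, namely the joint continuity of $(s,z)\mapsto H_{\gamma(s)}(z)$, which follows from equicontinuity of normalized $K$-quasiconformal maps together with the continuous dependence of $w^{\mu}$ and its inverse on $\mu$.
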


See \S12 in~\cite{M1} for the proof. 

If $f(t) = [w^{\mu}]_E$ and $g(t) = [w^{\nu}]_E$, Condition (i) of the lemma means that $w^{\mu}(z) = w^{\nu}(z)$ for all $z$ in $E$. 

If $E$ is a subset of the closed set $\widehat E$ and $\mu$ is in $M(\C)$, then the $\widehat E$-equivalence class of $w^{\mu}$ is contained in the $E$-equivalence class of $w^{\mu}$. Therefore, there is a well-defined `forgetful map' 
\begin{equation}~\label{projee}
p_{\widehat E, E}: T(\widehat E)\mapsto T(E)
\end{equation}
such that $P_E = p_{\widehat{E},E} \circ P_{\widehat{E}}$. It is easy to see that this is a basepoint preserving holomorphic split submersion. 

The following is a consequence of Lemma~\ref{same}. Here, $\Psi_E$ is the universal holomorphic motion of $E$ and $\Psi_{\widehat E}$ is the universal motion of $\widehat E$.

\medskip 
\begin{prop}~\label{le}
Let $V$ be a connected complex Banach manifold with basepoint, and let $f$ and $g$ be basepoint preserving holomorphic maps from $T(E)$ and $T(\widehat E)$ respectively. Then $ p_{\widehat E,E} \circ g = f$ if and only if $g^*(\Psi_{\widehat E})$ extends $f^*(\Psi_E)$. 
\end{prop}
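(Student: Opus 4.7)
The statement factors into two implications, and both reduce to the compatibility identity $P_E = p_{\widehat E, E} \circ P_{\widehat E}$ from (\ref{projee}). The uniform strategy is the following: for an arbitrary $x \in V$, choose (using surjectivity of $P_{\widehat E}$) a Beltrami coefficient $\nu \in M(\C)$ with $g(x) = P_{\widehat E}(\nu) = [w^{\nu}]_{\widehat E}$. Then by Definition~\ref{uhm},
\[
g^{*}(\Psi_{\widehat E})(x, z) = \Psi_{\widehat E}(g(x), z) = w^{\nu}(z) \quad \text{for all } z \in \widehat E,
\]
so both sides of the desired equivalence end up expressed through the single function $w^{\nu}$.

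For the forward implication, assume $p_{\widehat E, E} \circ g = f$. Then $f(x) = p_{\widehat E, E}([w^{\nu}]_{\widehat E}) = [w^{\nu}]_{E} = P_{E}(\nu)$, hence $f^{*}(\Psi_{E})(x, z) = w^{\nu}(z)$ for all $z \in E$. Since $E \subset \widehat E$, the map $g^{*}(\Psi_{\widehat E})$ restricted to $V \times E$ coincides with $f^{*}(\Psi_{E})$, which is exactly the extension statement.

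For the converse, introduce the auxiliary map $h := p_{\widehat E, E} \circ g : V \to T(E)$. Since $p_{\widehat E, E}$ is a basepoint preserving holomorphic split submersion and $g$ is basepoint preserving and holomorphic, so is $h$. Running the same computation as above with $h$ in place of $f$ gives $h^{*}(\Psi_{E})(x, z) = w^{\nu}(z) = g^{*}(\Psi_{\widehat E})(x, z)$ for every $z \in E$, and by the hypothesis that $g^{*}(\Psi_{\widehat E})$ extends $f^{*}(\Psi_{E})$, this equals $f^{*}(\Psi_{E})(x, z)$. Thus $\Psi_{E}(h(x), z) = \Psi_{E}(f(x), z)$ for all $(x, z) \in V \times E$. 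Since $V$ is a connected complex Banach manifold, hence path-connected, and since $h$ and $f$ both preserve the basepoint (so they agree at $x_{0}$), Lemma~\ref{same} applies and forces $h(x) = f(x)$ for every $x \in V$. That is precisely $p_{\widehat E, E} \circ g = f$.

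I do not anticipate a serious obstacle: the proposition is essentially a dictionary entry translating between lifts to a bigger Teichm\"uller space and extensions of the corresponding pulled-back motions. The only point that requires a moment of care is verifying the hypotheses of Lemma~\ref{same} in the converse direction, namely the path-connectedness of $V$ and the fact that $h$ and $f$ share the basepoint value; both are immediate from the set-up.
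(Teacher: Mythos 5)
Your proof is correct and follows exactly the route the paper intends: the paper states that Proposition~\ref{le} is a consequence of Lemma~\ref{same} and outsources the details to \cite{M1}, and your argument is precisely that deduction spelled out, using the factorization $P_E = p_{\widehat E,E}\circ P_{\widehat E}$ to express both motions via a common $w^{\nu}$ and then invoking Lemma~\ref{same} for the converse. No gaps.
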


See \S13 in \cite{M1} for the proof. We say that the {\hm} map $g$ {\it lifts} the {\hm} map $f$. 

We now discuss the concept of {\it monodromy} of a {\hm} motion. We closely follow the discussion in \S2 in \cite{BJMS}. 
Let $\phi: V \times E \to \RS$ be a {\hm} motion, where $V$ is a connected complex Banach manifold with a basepoint $x_0$. Let $\pi: \widetilde V \to V$ be a {\hm} universal covering, with the group of deck transformations $\Gamma$. We choose a point $\widetilde x_0$ in $\widetilde V$ such that $\pi(\widetilde x_0) = x_0$. Let $\pi_1(V, x_0)$ denote the fundamental group of $V$ with basepoint $x_0$.

Let $\Phi = \pi^{*}(\phi)$. Then, $\Phi: \widetilde V \times E \to \RS$ is a {\hm} motion of $E$ over $\widetilde V$ with $\widetilde x_0$ as the basepoint. By Remark~\ref{qcrep}, there exists a basepoint preserving continuous map $\widetilde f:\widetilde{V} \to M(\C)$ such that
$$
\Phi(x,z) = w^{\widetilde{f}(x)}(z)
$$
for each $x \in \widetilde V$ and each $z \in E$. 

For each $z \in E$ and for each $\gamma \in \Gamma$, we have 
$$w^{\widetilde{f} \circ \gamma(\widetilde x_0)}(z) = \Phi(\gamma(\widetilde x_0), z) = \phi(\pi \circ \gamma(\widetilde x_0), z) = \phi(x_0, z) = z.$$
Therefore, $w^{\widetilde{f} \circ \gamma(\widetilde x_0)}$ keeps every point of $E$ fixed. 

\medskip
\begin{lemma}~\label{indep} 
The homotopy class of $w^{\widetilde{f} \circ \gamma(\widetilde x_0)}$ relative to $E$ does not depend on the choice of the continuous map $\widetilde{f}$.
\end{lemma}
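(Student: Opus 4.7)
The plan is to prove that, for every deck transformation $\gamma \in \Gamma$, the point $P_E(\widetilde f(\gamma(\widetilde x_0))) \in T(E)$ is intrinsically determined by $\phi$ (together with the covering data $\pi$, $\widetilde x_0$ and the element $\gamma$) and does not depend on the choice of $\widetilde f$. Since the fibers of $P_E$ are by construction the $E$-equivalence classes, and $E$-equivalence is witnessed by an isotopy rel $E$ (which is in particular a homotopy rel $E$) by Definition~\ref{ts}, this intrinsic characterization immediately implies the statement of the lemma.

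The first move is to pass $\Phi$ through Theorem~\ref{univ}. The universal cover $\widetilde V$ is a simply connected complex Banach manifold (with basepoint $\widetilde x_0$), and the covering map $\pi$ is a basepoint preserving holomorphic map, so $\Phi = \pi^*(\phi)$ is a holomorphic motion of $E$ over $\widetilde V$. Theorem~\ref{univ} then produces a \emph{unique} basepoint preserving holomorphic map $F: \widetilde V \to T(E)$ with $F^*(\Psi_E) = \Phi$. The map $F$ is manifestly independent of any auxiliary choice.

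Next, for any admissible $\widetilde f$, I would compare $F$ with the continuous map $G := P_E \circ \widetilde f : \widetilde V \to T(E)$. Both maps are basepoint preserving (since $\widetilde f(\widetilde x_0)$ is the basepoint of $M(\C)$ and $P_E$ carries basepoint to basepoint), and both pull back the universal motion to $\Phi$: indeed, for every $(x,z) \in \widetilde V \times E$,
\[
\Psi_E(G(x), z) \;=\; \Psi_E(P_E(\widetilde f(x)), z) \;=\; w^{\widetilde f(x)}(z) \;=\; \Phi(x,z) \;=\; \Psi_E(F(x), z).
\]
Applying Lemma~\ref{same} on the path-connected space $\widetilde V$ with $t_0 = \widetilde x_0$ forces $F \equiv P_E \circ \widetilde f$. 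Evaluating at $\gamma(\widetilde x_0)$ gives $P_E(\widetilde f(\gamma(\widetilde x_0))) = F(\gamma(\widetilde x_0))$, which is visibly independent of $\widetilde f$.

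Finally, given two admissible choices $\widetilde f_1, \widetilde f_2$, the equality $P_E(\widetilde f_1(\gamma(\widetilde x_0))) = P_E(\widetilde f_2(\gamma(\widetilde x_0)))$ says exactly that $w^{\widetilde f_1(\gamma(\widetilde x_0))}$ and $w^{\widetilde f_2(\gamma(\widetilde x_0))}$ are $E$-equivalent; post-composing the isotopy-to-identity of their quotient with $w^{\widetilde f_1(\gamma(\widetilde x_0))}$ yields an isotopy rel $E$ between the two maps, and hence a homotopy rel $E$. The only real obstacle is the comparison step: Theorem~\ref{univ} only characterizes $F$ uniquely among \emph{holomorphic} maps, while $P_E \circ \widetilde f$ is merely continuous, so the identification must happen in the continuous category — and that is precisely the role of Lemma~\ref{same}.
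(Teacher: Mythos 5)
Your proof is correct, and it is essentially the argument the paper has in mind: the paper defers to Lemma 2.12 of \cite{BJMS}, whose proof is exactly this rigidity argument via Lemma~\ref{same} applied to continuous maps into $T(E)$ that induce the same motion and agree at the basepoint. Your detour through the holomorphic map $F$ from Theorem~\ref{univ} is harmless but not needed --- one can apply Lemma~\ref{same} directly to $P_E\circ\widetilde f_1$ and $P_E\circ\widetilde f_2$ for two admissible choices $\widetilde f_1,\widetilde f_2$ --- and your closing observation that equality in $T(E)$ gives an isotopy (hence homotopy) rel $E$ correctly finishes the reduction.
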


See Lemma 2.12 in~\cite{BJMS}.

\medskip

We now assume that $E$ is a finite set containing $n$ points where $n \geq 4$; as usual, $0$, $1$, and $\infty$ are in $E$. Let $\phi: V \times E \to \RS$ be a {\hm} motion. The map $w^{\widetilde{f} \circ \gamma(\widetilde x_0)}$ is quasiconformal selfmap of the hyperbolic Riemann surface $X_E:= \RS \setminus E$.  Therefore, it represents a mapping class of $X_E$, and by Lemma~\ref{indep}, we have a homomorphism $\rho_{\phi}: \pi_{1}(V, x_0) \to$ Mod$(0,n)$ given by
$$
\rho_{\phi}(c) = [w^{\widetilde{f} \circ \gamma(\widetilde x_0)}]
$$
where Mod$(0,n)$ is the mapping class group of the $n$-times punctured sphere, $\gamma \in \Gamma$ is the element 
corresponding to $c \in \pi_1(V, x_0)$, and $[w]$ denotes the mapping class group of $X_E$ for $w$. 

\medskip
\begin{definition}~\label{mon}
We call the homomorphism $\rho_{\phi}$ the {\it monodromy} of $\phi$
the holomorphic motion $\phi$ of the finite set E. The monodromy is called {\it trivial} if it maps every element of $\pi_1(V, x_0)$ to the identity of Mod$(0, n)$. 
\end{definition}

\section{Statements of the main theorems}

In this Section, we give the precise statements of the main theorems of our paper.
 
\begingroup
\setcounter{tmp}{\value{theo}}
\setcounter{theo}{0} 
\renewcommand\thetheo{\Alph{theo}}
  
\medskip
\begin{theo}~\label{thmA}
Let $\phi: V \times E \to \RS$ be a {\hm} motion of a finite set $E$, containing the points $0$, $1$, and $\infty$, where $V$ is a connected complex Banach manifold with  basepoint $x_0$. The following are equivalent:
\begin{itemize}
\item[(i)] There exists a continuous motion $\widetilde\phi: V \times \RS \to \RS$, such that $\widetilde\phi$ extends $\phi$. 
\item[(ii)] The monodromy $\rho_{\phi}$ is trivial. 
\end{itemize}
\end{theo}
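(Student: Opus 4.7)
My plan is to work on the universal cover. Let $\pi: \widetilde V \to V$ be a holomorphic universal cover with deck group $\Gamma$, choose $\widetilde x_0 \in \pi^{-1}(x_0)$, and set $\Phi = \pi^*(\phi)$. Since $\widetilde V$ is simply connected, Theorem~\ref{univ} gives a unique basepoint preserving holomorphic $F: \widetilde V \to T(E)$ with $F^*(\Psi_E) = \Phi$. Both implications will be derived by analyzing how $F$ interacts with $\Gamma$.

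For (i) $\Rightarrow$ (ii): if $\phi$ extends to a continuous motion, then Theorem~\ref{ceq} supplies a basepoint preserving holomorphic $G: V \to T(E)$ with $G^*(\Psi_E) = \phi$. The map $G \circ \pi$ pulls $\Psi_E$ back to $\Phi$, so by uniqueness in Theorem~\ref{univ} we have $F = G \circ \pi$. Taking $\widetilde f = s \circ F$ as in Remark~\ref{qcrep} and using $\pi \circ \gamma = \pi$, we get $\widetilde f(\gamma(\widetilde x_0)) = s(G(x_0)) = 0$ for every $\gamma \in \Gamma$, hence $w^{\widetilde f(\gamma(\widetilde x_0))} = \mathrm{id}_{\RS}$ and $\rho_\phi$ is trivial.

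For (ii) $\Rightarrow$ (i): with $F$ and $\widetilde f = s \circ F$ as above, triviality of $\rho_\phi$ says $w^{\widetilde f(\gamma(\widetilde x_0))}$ is isotopic to $\mathrm{id}_{\RS}$ rel $E$ for every $\gamma \in \Gamma$. This places $\widetilde f(\gamma(\widetilde x_0))$ in the same $P_E$-fiber as the zero Beltrami coefficient, and applying $P_E$ together with $P_E \circ s = \mathrm{id}$ gives $F(\gamma(\widetilde x_0)) = F(\widetilde x_0)$. The key step is to promote this pointwise identity at the basepoint into global $\Gamma$-invariance of $F$. Fix $\gamma \in \Gamma$ and apply Lemma~\ref{same} to the continuous maps $F$ and $F \circ \gamma$ from $\widetilde V$ to $T(E)$: hypothesis (i) holds because
$$
\Psi_E(F(\gamma(t)), z) = \Phi(\gamma(t),z) = \phi(\pi(\gamma(t)),z) = \phi(\pi(t),z) = \Phi(t,z) = \Psi_E(F(t), z),
$$
and hypothesis (ii) holds at $t_0 = \widetilde x_0$ by the previous sentence. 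So $F \circ \gamma = F$; since $\pi$ is locally biholomorphic, $F$ descends to a basepoint preserving holomorphic $\overline F: V \to T(E)$ with $\overline F^*(\Psi_E) = \phi$, and Theorem~\ref{ceq} yields the continuous extension.

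The main obstacle is precisely the last reduction above: turning the discrete, orbit-level equality $F(\gamma(\widetilde x_0)) = F(\widetilde x_0)$---which is a direct reformulation of trivial monodromy---into the pointwise equality $F \circ \gamma = F$ on all of $\widetilde V$. Everything is made to work by the rigidity built into Lemma~\ref{same}, which is what allows a condition on generators of $\pi_1(V, x_0)$ to propagate into a well-defined holomorphic map on $V$.
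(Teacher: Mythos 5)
Your proposal is correct and follows essentially the same route as the paper: pull back to the universal cover, obtain the classifying map $F$ via Theorem~\ref{univ}, use triviality of the monodromy to get $F(\gamma(\widetilde x_0))=F(\widetilde x_0)$, promote this to $F\circ\gamma=F$ by Lemma~\ref{same}, descend to $V$, and invoke Theorem~\ref{ceq}. The only cosmetic difference is in (i)~$\Rightarrow$~(ii), where you route through Theorem~\ref{ceq} and the Douady--Earle section while the paper uses Corollary~\ref{conbel} to produce the continuous map into $M(\C)$ directly; both yield a representative $\widetilde f$ that is constant on the fiber over $x_0$, so the argument is the same.
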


In the next theorem, let $\widehat E = E \cup \{\zeta\}$, where, $E$ is a finite set containing $0$, $1$, and $\infty$, and $\zeta \in \C \setminus E$. Let $V$ be a connected complex Banach manifold with basepoint $x_0$. 

\medskip
\begin{theo}~\label{thmB}
Suppose every {\hm} map from $V $ into $T(E)$ lifts to a {\hm} map from $V$ into $T(\widehat E)$. Then, if $\phi: V \times E \to \RS$ is a {\hm} motion that has trivial monodromy, there exists a {\hm} motion $\widehat\phi : V \times \widehat{E} \to \RS$ such that $\widehat\phi$ extends $\phi$ and also has trivial monodromy.
\end{theo}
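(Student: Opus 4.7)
The plan is to chain together the three machinery results developed in Section~1. Theorem~A converts the trivial-monodromy hypothesis into the existence of a continuous extension of $\phi$; Theorem~\ref{ceq} converts such a continuous extension into a classifying basepoint-preserving holomorphic map $F \colon V \to T(E)$; the lifting hypothesis supplies a basepoint-preserving holomorphic lift $\widehat F \colon V \to T(\widehat E)$; and Proposition~\ref{le} converts this lift into an extension $\widehat\phi$ of $\phi$. A final round-trip through Theorem~\ref{ceq} and Theorem~A then verifies the triviality of the monodromy of $\widehat\phi$.

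Concretely, I first apply Theorem~A to $\phi$: because $E$ is finite and $\rho_\phi$ is trivial, there is a continuous motion of $\RS$ over $V$ extending $\phi$. By the implication (i)$\Rightarrow$(ii) of Theorem~\ref{ceq}, this produces a basepoint-preserving holomorphic map $F \colon V \to T(E)$ with $F^*(\Psi_E) = \phi$. Invoking the lifting hypothesis then gives a basepoint-preserving holomorphic map $\widehat F \colon V \to T(\widehat E)$ satisfying $p_{\widehat E, E} \circ \widehat F = F$. Setting
$$\widehat\phi := \widehat F^*(\Psi_{\widehat E}),$$
I obtain a holomorphic motion of $\widehat E$ over $V$, and by Proposition~\ref{le} this $\widehat\phi$ extends $\phi$.

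It remains to show that $\widehat\phi$ has trivial monodromy. Since $\widehat E$ is a finite set containing $0$, $1$, $\infty$, Theorem~A is applicable to $\widehat\phi$. But $\widehat\phi = \widehat F^*(\Psi_{\widehat E})$ arises as the pullback of the universal motion by a basepoint-preserving holomorphic map, so the implication (ii)$\Rightarrow$(i) of Theorem~\ref{ceq} produces a continuous motion of $\RS$ over $V$ extending $\widehat\phi$. Theorem~A then concludes that $\rho_{\widehat\phi}$ is trivial, as desired.

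I do not anticipate any substantial obstacle in this argument: once Theorem~A is available, the proof is essentially formal, and the genuine content of Theorem~B is packaged into the lifting hypothesis (whose verification in concrete geometric settings is presumably the main work of later sections). The only delicate point to be careful about is that the lift $\widehat F$ furnished by the hypothesis is indeed basepoint-preserving, so that $\widehat\phi(x_0, \cdot)$ fixes every point of $\widehat E$ and Proposition~\ref{le} applies verbatim; this is consistent with the standing convention that lifts referenced in the paper's framework preserve basepoints, since the basepoint of $T(\widehat E)$ lies in the fiber over the basepoint of $T(E)$ under $p_{\widehat E, E}$.
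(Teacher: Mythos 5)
Your proposal is correct and follows essentially the same route as the paper: classify $\phi$ by a basepoint-preserving holomorphic map into $T(E)$, lift it via the hypothesis, pull back $\Psi_{\widehat E}$, and invoke Proposition~\ref{le} for the extension. The only cosmetic difference is in the last step: the paper verifies triviality of $\rho_{\widehat\phi}$ by a direct computation with the Douady--Earle section $\alpha = s_{\widehat E}\circ \widehat f$, whereas you obtain it by citing Theorem~\ref{ceq} (ii)$\Rightarrow$(i) followed by Theorem~\ref{thmA}, which amounts to the same argument packaged as a black box.
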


In the next three theorems, $X$ is a hyperbolic Riemann surface with a basepoint $x_0$, and $E$ is a closed set in $\RS$ 
containing the points $0, 1$, and $\infty$.

Let  $E_{n}=\{0, 1, \infty, \xi_{1}, \cdots, \xi_{n}\}$ where $n\geq 1$, and $E_{n+1} =E_{n}\cup \{\xi_{n+1}\}$ where  $\xi_{n+1}\in\RS\setminus E_{n}$.  
 Let $p: T(E_{n+1})\to T(E_{n})$ denote the forgetful map in (\ref{projee}). Let $\phi_{n}: X\times E_{n}\to \RS$ 
be a {\hm} motion, that has trivial monodromy. We will see in the proof of Theorem~\ref{thmA} that there exists 
a unique basepoint preserving holomorphic map $f_{n}: X\to T(E_{n})$ such that $f_{n}^{*} (\Psi_{E_{n}})=\phi_{n}$.
The following  theorem is a key result in our paper.

\medskip
\begin{theo}~\label{thmC}
Let $\phi_{n}: X\times E_{n} \to \RS$ be a holomorphic motion. If the monodromy of $\phi_{n}$ is trivial, 
there exists a basepoint preserving holomorphic map $f_{n+1} : X \to T(E_{n+1})$ such that $p \circ f_{n+1} = f_n$. 
\end{theo}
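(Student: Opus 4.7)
The plan is to pass to the holomorphic universal cover $\pi\colon \widetilde X\to X$, which is biholomorphic to $\D$ since $X$ is hyperbolic, with deck group $\Gamma$ and basepoint $\widetilde x_0\in\pi^{-1}(x_0)$. On the simply connected $\widetilde X$ one applies Slodkowski's theorem to the pulled-back motion $\widetilde\phi_n:=\pi^*\phi_n$, extends it to $E_{n+1}$, and uses Theorem~\ref{univ} to produce a holomorphic lift $\widetilde f_{n+1}\colon \widetilde X\to T(E_{n+1})$ of $\widetilde f_n:=f_n\circ\pi$ through $p$. The trivial monodromy hypothesis on $\phi_n$ is then exploited to show $\widetilde f_{n+1}$ is $\Gamma$-invariant, so that it descends to the required basepoint-preserving holomorphic map $f_{n+1}\colon X\to T(E_{n+1})$ with $p\circ f_{n+1}=f_n$.

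More concretely, $\widetilde\phi_n$ is a $\Gamma$-invariant basepoint-preserving holomorphic motion of $E_n$ over $\widetilde X$, and $\widetilde f_n^*(\Psi_{E_n})=\widetilde\phi_n$. Because $\widetilde X\cong\D$, Slodkowski's theorem extends $\widetilde\phi_n$ to a holomorphic motion $\widetilde\Phi\colon \widetilde X\times\RS\to\RS$; its restriction $\widetilde\phi_{n+1}:=\widetilde\Phi|_{\widetilde X\times E_{n+1}}$ is a basepoint-preserving holomorphic motion of $E_{n+1}$ extending $\widetilde\phi_n$. Theorem~\ref{univ} then supplies a unique basepoint-preserving holomorphic map $\widetilde f_{n+1}\colon \widetilde X\to T(E_{n+1})$ with $\widetilde f_{n+1}^*(\Psi_{E_{n+1}})=\widetilde\phi_{n+1}$, and Proposition~\ref{le} forces $p\circ\widetilde f_{n+1}=\widetilde f_n$.

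For the descent, fix $\gamma\in\Gamma$. Because $\widetilde f_n\circ\gamma=\widetilde f_n$, the composition $\widetilde f_{n+1}\circ\gamma$ is another holomorphic lift of $\widetilde f_n$ through $p$. By Lemma~\ref{same}, concluding $\widetilde f_{n+1}\circ\gamma=\widetilde f_{n+1}$ reduces to verifying $\widetilde f_{n+1}(\gamma\widetilde x_0)=\widetilde f_{n+1}(\widetilde x_0)$ together with $\Psi_{E_{n+1}}(\widetilde f_{n+1}(\gamma\widetilde x),z)=\Psi_{E_{n+1}}(\widetilde f_{n+1}(\widetilde x),z)$ for every $z\in E_{n+1}$. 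For $z\in E_n$ this is automatic from the $\Gamma$-invariance of $\widetilde\phi_n$, so the substantive content is at $z=\xi_{n+1}$: letting $\widetilde\nu=s\circ\widetilde f_{n+1}$ be the Douady-Earle lift, we must show that the normalized quasiconformal self-map $w^{\widetilde\nu(\gamma\widetilde x_0)}$ of $\RS$ is isotopic to the identity rel $E_{n+1}$. The triviality of $\rho_{\phi_n}$ (combined with Lemma~\ref{indep}, which makes the choice of continuous lift immaterial) gives isotopy rel $E_n$, and the remaining freedom in Slodkowski's extension, organized geometrically by the Bers fiber space realization of $p\colon T(E_{n+1})\to T(E_n)$ as a disk bundle over $T(E_n)$, is used to upgrade this to an isotopy rel $E_{n+1}$. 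Once $\widetilde f_{n+1}$ has been shown to be $\Gamma$-invariant, it factors through $\pi$ to yield the required $f_{n+1}$.

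\emph{Main obstacle.} The technical heart of the argument is the descent step: trivial monodromy of $\phi_n$ provides only isotopy rel $E_n$, and the subtle point is to exploit the freedom in the choice of Slodkowski extension -- equivalently, the $\D$-fibers of the Bers fiber space pulled back to $X$ -- to kill the obstruction class in $\pi_1(\RS\setminus E_n,\xi_{n+1})$ and promote the isotopy to one rel $E_{n+1}$. This is where the hyperbolicity of $X$ and the precise fiber structure of $p$ must come together with the mapping-class triviality of $\rho_{\phi_n}$.
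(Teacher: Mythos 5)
There is a genuine gap, and it sits exactly where you flag the ``main obstacle'': the descent step is not a technicality to be patched by ``the remaining freedom in Slodkowski's extension'' --- it is the entire content of the theorem, and your argument does not supply it. Two separate problems arise. First, to apply Lemma~\ref{same} to $\widetilde f_{n+1}\circ\gamma$ and $\widetilde f_{n+1}$ you need $\Psi_{E_{n+1}}(\widetilde f_{n+1}(\gamma\widetilde x),\xi_{n+1})=\Psi_{E_{n+1}}(\widetilde f_{n+1}(\widetilde x),\xi_{n+1})$ for \emph{every} $\widetilde x\in\widetilde X$, i.e.\ the trajectory $\widetilde x\mapsto\widetilde\Phi(\widetilde x,\xi_{n+1})$ of the added point must itself be $\Gamma$-invariant. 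An arbitrary Slodkowski extension gives no control over this trajectory away from the basepoint; your text states the condition for all $z\in E_{n+1}$ but then only addresses it at $\widetilde x_0$. Second, even the basepoint condition $\widetilde f_{n+1}(\gamma\widetilde x_0)=\widetilde f_{n+1}(\widetilde x_0)$ (isotopy rel $E_{n+1}$ rather than merely rel $E_n$) is asserted to follow from ``killing the obstruction class'' via the Bers fiber structure, but no mechanism is given; and since infinitely many $\gamma\in\Gamma$ must be accommodated simultaneously, adjusting the extension $\gamma$-by-$\gamma$ cannot work --- you need a single extension equivariant under all of $\Gamma$ at once.

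The paper's proof avoids Slodkowski entirely, and that is the essential point. Via Nag's covering $p_n\colon T(E_n)\to Y_n$ it converts $f_n\circ\pi$ into an $n$-tuple of holomorphic functions $F_n=(h_1,\dots,h_n)$ on $\Delta$, and then invokes the construction of \cite{JMW}: a compact operator $\mathcal K(F_n)$ on $\mathcal C(\C)$ built canonically from $F_n$, such that $\xi_{n+1}+\mathcal K(F_n)$ has a \emph{unique} fixed point $g_{n+1}$ (Schauder gives existence, Lemma 3.5 of \cite{JMW} gives uniqueness). This produces a canonical trajectory $h_{n+1}$ for the new point depending only on $F_n$; since trivial monodromy gives $F_n\circ\gamma=F_n$, the operators $\mathcal K(F_n\circ\gamma)$ and $\mathcal K(F_n)$ coincide, and uniqueness of the fixed point forces $g_{n+1}\circ\gamma=g_{n+1}$, hence $F_{n+1}\circ\gamma=F_{n+1}$ and the lift descends. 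In short, the theorem is proved by replacing the highly non-unique Slodkowski extension with a functorial one whose uniqueness delivers the equivariance for free; producing such a canonical, $\Gamma$-equivariant choice is precisely what your proposal leaves open.
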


The following corollary is an immediate consequence. Here $\Psi_{E_{n+1}}: T(E_{n+1})\times E_{n+1}\to \RS$ is the universal \hm motion of $E_{n+1}$. 

\medskip
\begin{corollary}~\label{exttrivial}
Let $\phi_{n+1}: =f^{*}_{n+1} (\Psi_{E_{n+1}})$. Then $\phi_{n}$ extends $\phi_{n+1}$  and has the trivial monodromy.
\end{corollary}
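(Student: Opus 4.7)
The plan is to derive this corollary from Theorem~\ref{thmC} by two short applications of previously established results: Proposition~\ref{le} for the extension claim, and the combination of Theorem~\ref{ceq} with Theorem~\ref{thmA} for the triviality of the monodromy.

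First I would establish that $\phi_{n+1}$ extends $\phi_{n}$ (as is clearly intended by the statement, since $E_{n}\subset E_{n+1}$). By Theorem~\ref{thmC} there is a basepoint preserving holomorphic map $f_{n+1}: X \to T(E_{n+1})$ satisfying $p \circ f_{n+1} = f_{n}$, where $p = p_{E_{n+1}, E_{n}}$ is the forgetful map of~(\ref{projee}). Proposition~\ref{le}, applied with $V = X$, $E = E_{n}$, $\widehat{E} = E_{n+1}$, $f = f_{n}$, and $g = f_{n+1}$, converts the identity $p \circ f_{n+1} = f_{n}$ into the assertion that $g^{*}(\Psi_{\widehat{E}}) = f_{n+1}^{*}(\Psi_{E_{n+1}}) = \phi_{n+1}$ extends $f^{*}(\Psi_{E}) = f_{n}^{*}(\Psi_{E_{n}}) = \phi_{n}$.

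Next I would show that the monodromy of $\phi_{n+1}$ is trivial. By construction, $\phi_{n+1}$ is the pullback $f_{n+1}^{*}(\Psi_{E_{n+1}})$ of the universal holomorphic motion under the basepoint preserving holomorphic map $f_{n+1}: X \to T(E_{n+1})$, so condition (ii) of Theorem~\ref{ceq} holds for $\phi_{n+1}$. Consequently, there exists a continuous motion $\widetilde{\phi}_{n+1}: X \times \RS \to \RS$ extending $\phi_{n+1}$. Since $E_{n+1}$ is finite and, because $n \geq 1$, contains at least five points, Theorem~\ref{thmA} applies with parameter space $V = X$: the existence of such a continuous extension is equivalent to the triviality of $\rho_{\phi_{n+1}}$. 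Hence the monodromy of $\phi_{n+1}$ is trivial.

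There is no substantive obstacle in this argument; it is a direct unpacking of Theorem~\ref{thmC} via Proposition~\ref{le}, Theorem~\ref{ceq}, and Theorem~\ref{thmA}. The whole mathematical weight of the corollary rests entirely on the construction of the lift $f_{n+1}$ carried out in Theorem~\ref{thmC}—once that lift is in hand, nothing more than citing the preceding structural results is required.
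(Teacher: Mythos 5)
Your argument is correct. It follows the paper for the extension claim (Theorem~\ref{thmC} plus Proposition~\ref{le}), but for the triviality of the monodromy the paper instead cites Theorem~\ref{thmB}, whose proof shows directly that any motion of the form $\widehat f^{*}(\Psi_{\widehat E})$ with $\widehat f$ a basepoint preserving holomorphic lift has trivial monodromy (by choosing the Beltrami representative $\beta=\alpha\circ\pi$ with $\alpha=s_{\widehat E}\circ\widehat f$ and computing $\rho(c)=[w^{\alpha(x_0)}]=[\mathrm{Id}]$). Your route---from $\phi_{n+1}=f_{n+1}^{*}(\Psi_{E_{n+1}})$ to a continuous motion via Theorem~\ref{ceq}, and then to trivial monodromy via the implication (i)$\Rightarrow$(ii) of Theorem~\ref{thmA}---is mathematically the same computation repackaged through Corollary~\ref{conbel}, so neither approach is deeper than the other. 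If anything, your version is slightly cleaner as a matter of bookkeeping: Theorem~\ref{thmB} as stated hypothesizes that \emph{every} holomorphic map from $V$ into $T(E)$ lifts, whereas Theorem~\ref{thmC} only produces a lift of the particular map $f_{n}$, so the paper's citation of Theorem~\ref{thmB} is really a citation of the argument inside its proof; your path through Theorems~\ref{ceq} and~\ref{thmA} applies verbatim to the single lift at hand. You are also right to read the statement as ``$\phi_{n+1}$ extends $\phi_{n}$''; the corollary as printed has the two motions transposed.
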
 


\medskip
\begin{theo}~\label{thmD}
Let $\phi: X \times E \to \RS$ be a {\hm} motion such that $\phi$ restricted to $X \times E'$ has trivial monodromy, or extends to a continuous motion of $\RS$ (over $X$),
where $E'$ is any finite subset of $E$, containing the points $0, 1$, and $\infty$. Then, there
exists a {\hm} motion $\widehat{\phi}: X \times \RS \to \RS$ such that $\widehat{\phi}$ extends $\phi$. 
\end{theo}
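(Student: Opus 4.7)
The plan is to add a countable dense subset of $\RS$ to $E$ one point at a time via Theorem~\ref{thmC}, and then invoke the standard closure-extension property for holomorphic motions to pass from a dense subset of $\RS$ to all of $\RS$. First, for each finite $F\subset E$ containing $\{0,1,\infty\}$, the trivial-monodromy hypothesis combined with Theorem~\ref{thmA} and Theorem~\ref{ceq} yields a unique basepoint preserving holomorphic map $f_F:X\to T(F)$ with $f_F^{*}(\Psi_F)=\phi|_{X\times F}$; Lemma~\ref{same} then supplies the compatibility $p_{F',F}\circ f_{F'}=f_F$ whenever $F\subset F'$. Choose a countable dense subset $Z=\{z_1,z_2,\dots\}\subset\RS$ containing $\{0,1,\infty\}$, and set $E_n=E\cup\{z_1,\dots,z_n\}$.

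Next I would construct inductively holomorphic motions $\phi^{(n)}:X\times E_n\to\RS$ extending $\phi$ such that $\phi^{(n)}|_{X\times G}$ has trivial monodromy for every finite $G\subset E_n$ containing $\{0,1,\infty\}$. Starting from $\phi^{(0)}=\phi$, suppose $\phi^{(n)}$ is built and $z_{n+1}\notin E_n$. For each finite $F\subset E$ containing $\{0,1,\infty\}$, put $\widetilde F=F\cup\{z_1,\dots,z_n\}$; since $\phi^{(n)}|_{X\times\widetilde F}$ has trivial monodromy by the inductive hypothesis, Theorem~\ref{thmC} furnishes a basepoint preserving holomorphic lift $g_F:X\to T(\widetilde F\cup\{z_{n+1}\})$ of the map $f_{\widetilde F}^{(n)}$, and Corollary~\ref{exttrivial} confirms that $g_F^{*}(\Psi_{\widetilde F\cup\{z_{n+1}\}})$ extends $\phi^{(n)}|_{X\times\widetilde F}$ with trivial monodromy. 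I would then set $\phi^{(n+1)}(x,z_{n+1}):=\Psi_{\widetilde F\cup\{z_{n+1}\}}(g_F(x),z_{n+1})$ and let $\phi^{(n+1)}$ agree with $\phi^{(n)}$ on $X\times E_n$.

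The principal obstacle is well-definedness of $\phi^{(n+1)}(x,z_{n+1})$, i.e.\ independence of $F$, since Theorem~\ref{thmC} asserts only existence of the lift and not uniqueness. The plan to resolve this is twofold. The more conceptual route is to produce all the lifts canonically through the Douady--Earle section of Proposition~\ref{edes}, which simultaneously delivers the ``conformally natural'' extension advertised in the abstract. The direct route is a compatibility argument: for $F_1\subset F_2$, composing $g_{F_2}$ with the forgetful map $T(\widetilde F_2\cup\{z_{n+1}\})\to T(\widetilde F_1\cup\{z_{n+1}\})$ yields a second basepoint preserving lift of $f_{\widetilde F_1}^{(n)}$, and Proposition~\ref{le} together with an enhanced uniqueness assertion extracted from the proof of Theorem~\ref{thmC} should force this lift to coincide with $g_{F_1}$, so that their values at $z_{n+1}$ match. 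Once well-definedness is secured, holomorphicity of $\phi^{(n+1)}(\,\cdot\,,z_{n+1})$ is inherited from Theorem~\ref{thmC}, and trivial monodromy on an arbitrary finite $G\subset E_{n+1}$ propagates by enlarging $G$ to the image of some $g_F$ and then restricting via a forgetful map, invoking Proposition~\ref{le} and Corollary~\ref{exttrivial}. Finally the $\phi^{(n)}$'s assemble to a holomorphic motion $\phi^{(\infty)}:X\times(E\cup Z)\to\RS$ extending $\phi$; since $E$ is closed and $Z$ is dense in $\RS$, $E\cup Z$ is dense in $\RS$, and the standard closure-extension principle of \cite{MSS} and \cite{BJM} produces the desired holomorphic motion $\widehat\phi:X\times\RS\to\RS$ extending $\phi$.
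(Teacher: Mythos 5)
Your overall strategy (add a countable dense set one point at a time via Theorem~\ref{thmC}, then pass to the closure) is the right one, but the way you set it up creates a compatibility problem that the paper's proof deliberately avoids, and your proposed resolutions of that problem do not go through. By taking $E_n=E\cup\{z_1,\dots,z_n\}$ you must extend the motion of the \emph{entire} (possibly infinite) set $E$ at each stage, so Theorem~\ref{thmC} -- which is a statement about finite sets -- can only be applied to the finite pieces $\widetilde F=F\cup\{z_1,\dots,z_n\}$, and you are then forced to show that the value $\Psi_{\widetilde F\cup\{z_{n+1}\}}(g_F(x),z_{n+1})$ is independent of $F$. This is not a technicality one can wave away: lifts through the forgetful map $p$ are far from unique (the fibre is itself a Teichm\"uller space), and even the specific lifts produced in the proof of Theorem~\ref{thmC} arise as fixed points of operators $\mathcal K(F_n)$ that are built from \emph{all} the coordinate functions of $F_n$, so the operators attached to $\widetilde F_1\subsetneq\widetilde F_2$ are genuinely different and there is no reason their fixed points should assign the same position to $z_{n+1}$. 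Your ``Douady--Earle'' alternative does not help either: Proposition~\ref{edes} gives a continuous section of $P_E$, not a canonical holomorphic lift of maps into $T(E)$ to maps into $T(\widehat E)$. So the ``principal obstacle'' you name is a real gap, not a detail to be checked.

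The paper sidesteps this entirely by never keeping the infinite set $E$ in play during the induction. Fix one finite $E'\subset E$ at a time, set $E_0=E'$, and run the induction purely on finite sets $E_{n+1}=E_n\cup\{\zeta_{n+1}\}$ with $\bigcup_n E_n$ dense in $\RS$; here Theorem~\ref{thmC} and Corollary~\ref{exttrivial} apply verbatim, one simply makes a single sequence of choices, and no cross-compatibility between different finite subsets of $E$ is ever needed. This yields a holomorphic motion of $\RS$ extending $\phi|_{X\times E'}$ for each finite $E'$. The final gluing step is then not the closure-extension principle of \cite{MSS} applied to $E\cup Z$, but the theorem from \cite{BJMS} quoted as Theorem~\ref{preext}: if every finite restriction of $\phi$ extends to a holomorphic motion of $\RS$ over $X$, then $\phi$ itself does. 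That result is exactly the tool that replaces the well-definedness argument you were missing; without it (or an honest proof of your compatibility claim), your argument is incomplete.
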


\medskip
\begin{remark}
H. Shiga~\cite{S} has recently announced a completely different approach to a
part of this theorem. Our methods are totally independent and more direct. The
crucial point in our approach is the lifting property as given in Theorem~\ref{thmC}.
\end{remark}

\section{Proof of Theorem~\ref{thmA}}

Let $\pi: \widetilde V \to V$ be a {\hm} universal covering with the group $\Gamma$ of deck transformations, so that, $V = \widetilde{V}/\Gamma$, and $\pi(\widetilde x_0)  = x_0$.   

Suppose $\phi$ can be extended to a continuous motion $\widetilde\phi$ of $\RS$ over $V$. Then, by Corollary~\ref{conbel}, there exists a continuous map $f: V \to M(\C)$ such that $\widetilde\phi(x,z) = w^{f(x)}(z)$ for all $(x,z) \in V \times \RS$. Let $\widetilde{f} = f \circ \pi$. Then, for any $c \in \pi_1(V, x_0)$ with corresponding $\gamma \in \Gamma$, we have 
$$\rho_{\phi}(c) = [w^{\widetilde{f} \circ \gamma(\widetilde x_0)}] = [w^{f \circ \pi \circ \gamma(\widetilde x_0)}] = [w^{f(x_0)}] = [Id].$$ 
This shows that the monodromy $\rho_{\phi}$ is trivial.

Let $\phi:V  \times E \to \RS$ be a {\hm}  motion with trivial monodromy. Let $\phi_{\widetilde V}:= \pi^{*}(\phi)$ be the {\hm} motion of $E$ over $\widetilde V$. By Theorem~\ref{univ}, there exists a unique basepoint preserving {\hm} map $f_{\widetilde V}: \widetilde V \to T(E)$, such that $\phi_{\widetilde V} = f_{\widetilde V}^{*}(\Psi_{E})$. For any element $\gamma \in \Gamma$, we also have $f _{\widetilde V}\circ \gamma: \widetilde V \to T(E)$. Note that 
$$(f_{\widetilde V} \circ \gamma)^{*}(\Psi_{E})(x,z) = \Psi_{E}((f_{\widetilde V} \circ \gamma)x, z)
 = \phi_{\widetilde V}(\gamma(x), z) = \phi(\pi(\gamma(x)),z)$$ 
 $$= \phi(\pi(x), z) = \phi_{\widetilde V}(x,z) = (f_{\widetilde V})^{*}(\Psi_{E})(x,z).$$
By the triviality of the monodromy, we have $f_{\widetilde V} \circ \gamma(x_0) = f_{\widetilde V}(x_0)$ for all $\gamma \in \Gamma$. Lemma~\ref{same} implies that $f_{\widetilde V} \circ \gamma = f_{\widetilde V}$ for all $\gamma \in \Gamma$. Thus, $f_{\widetilde V}$ defines a unique basepoint preserving {\hm} map $f: V \to T(E)$ such that $\phi = f^*(\Psi_E)$. It then follows from Theorem~\ref{ceq} that there exists a continuous motion of $\RS$ over $V$ that extends $\phi$. This completes the proof. \qed

\section{Proof of Theorem~\ref{thmB}}

Let $\phi: V \times E \to \RS$ be a {\hm} motion such that it has trivial monodromy. By the proof of Theorem A, 
there exists a basepoint preserving {\hm} map $f: V \to T(E)$ such that $\phi = f^*(\Psi_{E})$. Let $p: T(\widehat E) \to T(E)$ 
denote the forgetful map defined in (\ref{projee}). By hypothesis, there exists a basepoint preserving {\hm} map $\widehat f: V \to T(\widehat E)$ such that $p \circ \widehat f = f$.  Let $\widehat\phi: = \widehat f^*(\Psi_{\widehat E})$. By Proposition~\ref{le}, $\widehat\phi$ extends $\phi$. Note that, for $x \in V$, and $z \in \widehat E$, we have $\widehat\phi(x,z) = w^{\alpha(x)}(z)$ where $\alpha = s_{\widehat E} \circ \widehat f$ and $s_{\widehat E}$ is the continuous section of the projection $P_{\widehat E}: M(\C) \to T(\widehat E)$; see Remark~\ref{sec}.

Let  $\pi: \widetilde V \to V$ be the {\hm} universal cover, and $\pi(\widetilde x_0) = x_0$. Then, $\pi^*(\widehat\phi): \widetilde V \times \widehat E \to \RS$ is a {\hm} motion. Since $\widetilde V$ is simply connected, there exists a basepoint preserving continuous map $\beta: \widetilde V \to M(\C)$ such that $\beta^*(\Psi_{\widehat E}) = \pi^*(\widehat\phi)$ (see Remark~\ref{qcrep}).
That implies $\pi^*(\widehat\phi)(x,z) = w^{\beta(x)}(z)$.  Recall that the monodromy $\rho: \pi_1(V) \to$ Mod$(0, n+1)$ is defined by
$$\rho(c) = [w^{\beta \circ \gamma(x_0)}]$$
 for any $c \in \pi_1(V, x_0)$ with the corresponding $\gamma \in \Gamma$.  Furthermore, it is independent of the choice of $\beta$. In particular, if we choose $\beta = \alpha \circ \pi$, we see that
 $$\rho(c) = [w^{\alpha \circ \pi \circ \gamma(x_0)}] = [w^{\alpha(x_0})] = [Id].$$
 Note that $\pi \circ \gamma(x_0) = x_0$. This implies that $\rho$ is trivial. \qed

\section{Proofs of Theorem~\ref{thmC} and Corollary~\ref{exttrivial}}

We recall the following result, due to S.~Nag, that will be fundamental in our paper; see \cite{N1}.

\medskip
\begin{theorem}~\label{nag} 
Given $n > 0$, choose a point $(\zeta_1, \cdot \cdot \cdot, \zeta_n)$ in the domain 
$$Y_n = \{(z_1, \cdot \cdot \cdot, z_n)\} \in \C^{n}: z_i \not= z_j \mbox { for } i \not= j \mbox { and } z_i \not= 0, 1 \mbox { for all } i = 1, \cdot \cdot \cdot, n\}$$
and let $E_n = \{0, 1, \infty, \zeta_1, \cdot \cdot \cdot, \zeta_n\}$. Then, the map $p_n: T(E_n) \to Y_n$ defined by
$$p_n([w^{\mu}]_{E_n}) = (w^{\mu}(\zeta_1), \cdot \cdot \cdot, w^{\mu}(\zeta_n)) \mbox { for all } \mu \in M(\C)$$
is a {\hm} universal covering. 
\end{theorem}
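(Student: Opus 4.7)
The plan is to verify three things: (a) $p_n$ is a well-defined holomorphic map between complex manifolds of the same complex dimension $n$, (b) $p_n$ is a local biholomorphism whose fibers are discrete and admit evenly covered neighborhoods, and (c) $T(E_n)$ is simply connected. Well-definedness is immediate from Definition~\ref{ts}: if $w^{\mu}$ and $w^{\nu}$ are $E_n$-equivalent then they agree pointwise on $E_n$, so in particular $w^{\mu}(\zeta_i) = w^{\nu}(\zeta_i)$ for each $i$.

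For holomorphicity and the local-biholomorphism claim, I would introduce the composite $q_n = p_n \circ P_{E_n}: M(\C) \to Y_n$ defined by $q_n(\mu) = (w^{\mu}(\zeta_1), \ldots, w^{\mu}(\zeta_n))$, which is holomorphic by the Ahlfors-Bers theorem on the holomorphic dependence of $w^{\mu}$ on $\mu$ (Theorem~11 of \cite{AB}). Since $P_{E_n}$ is a holomorphic split submersion, composing $q_n$ with local holomorphic sections of $P_{E_n}$ makes $p_n$ holomorphic. To see that $p_n$ is a local biholomorphism, first check that $\dim_{\C} Y_n = n$ matches $\dim_{\C} T(E_n) = 3g - 3 + k$ for $g=0$, $k=n+3$. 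Then I would verify that $dq_n$ is surjective at each $\mu$ by producing, for every $i$, an infinitesimal Beltrami differential supported on a small disk around $w^{\mu}(\zeta_i)$ (disjoint from the supports chosen for $j \neq i$) whose image under $dq_n$ hits only the $i$-th coordinate, giving an $n\times n$ invertible block. Surjectivity of $dq_n$, together with $P_{E_n}$ being a split submersion, forces $dp_n$ to be surjective, and the matching dimensions then make $p_n$ a local biholomorphism; in particular $p_n$ is an open map and surjectivity follows from surjectivity of $q_n$, which is clear since any prescribed target $(z_1,\ldots,z_n)\in Y_n$ can be realized by a smooth normalized diffeomorphism carrying $\zeta_i$ to $z_i$.

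To upgrade local biholomorphy to a covering map, observe first that each fiber $p_n^{-1}(y)$ is discrete: two points $[f]_{E_n}$, $[g]_{E_n}$ over the same $y$ differ by pre-composition with a quasiconformal self-map of $\RS$ fixing every point of $E_n$, and the set of such maps modulo isotopy rel $E_n$ is the countable pure mapping class group of the $(n+3)$-times punctured sphere. For evenly covered neighborhoods, I would use the allowable-map biholomorphism described just before Lemma~\ref{same} to move the basepoint of $T(E_n)$ to any prescribed preimage $[w]\in p_n^{-1}(y)$, reducing the local picture to one at the basepoint, where the Douady-Earle section of Proposition~\ref{edes} composed with $q_n$ furnishes an explicit local biholomorphism with a neighborhood of $y$. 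Finally, $T(E_n) \cong \mathrm{Teich}(\RS\setminus E_n)$ is homeomorphic to an open ball in $\C^n$ by classical Teichm\"uller theory, hence simply connected, so $p_n$ is the universal covering. The main obstacle will be showing that, as the neighborhood $U$ of $y$ shrinks, the sheets constructed at the various preimages remain pairwise disjoint -- equivalently, that the pure mapping class group acts properly discontinuously and freely on $T(E_n)$ via the change-of-basepoint operations; I would invoke the Teichm\"uller metric and the classical Teichm\"uller uniqueness theorem, which guarantees that distinct mapping-class representatives yield extremal quasiconformal deformations bounded away from one another.
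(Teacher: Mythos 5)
A preliminary remark on the comparison: the paper offers no proof of this statement at all --- it is quoted as a known theorem of Nag and used as a black box (the citation is \cite{N1}). So your proposal can only be measured against the classical argument, which is indeed the route you sketch: identify $T(E_n)$ with the Teichm\"uller space of the $(n+3)$-punctured sphere, observe that it is a cell (hence simply connected), and exhibit $p_n$ as the orbit map of the pure mapping class group acting holomorphically, freely and properly discontinuously, with $Y_n$ as the quotient. Your global skeleton (well-definedness, holomorphy via $q_n=p_n\circ P_{E_n}$ and the split submersion $P_{E_n}$, fibers equal to orbits of a countable group, simple connectivity) is the right one, but two of the load-bearing steps are defective as written.

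First, the ``block-diagonal'' surjectivity argument for $dq_n$ is incorrect: the derivative of $\mu\mapsto w^\mu(z)$ at the basepoint in the direction $\nu$ is the global integral $-\tfrac{1}{\pi}\iint_{\C}\nu(\zeta)\,\tfrac{z(z-1)}{\zeta(\zeta-1)(\zeta-z)}\,d\xi\,d\eta$, so a Beltrami differential supported in a small disk about $\zeta_i$ does \emph{not} move only the $i$-th coordinate; there is no invertible diagonal block for free. Surjectivity is true, but you must argue it correctly, e.g.\ from the linear independence of the kernels $\zeta\mapsto \tfrac{z_i(z_i-1)}{\zeta(\zeta-1)(\zeta-z_i)}$ for distinct $z_i$, or by producing local holomorphic sections of $p_n$ from the tautological holomorphic motion $\psi(x,\zeta_i)=x_i$ of $E_n$ over a simply connected neighborhood in $Y_n$ together with Theorem~\ref{univ} and an allowable change of basepoint. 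Relatedly, composing the Douady--Earle section with $q_n$ gives nothing: $q_n\circ s=p_n\circ P_{E_n}\circ s=p_n$, so this cannot furnish a local inverse of $p_n$ (and $s$ is only continuous, so it could not certify biholomorphy anyway). Second, and more seriously, the decisive step --- evenly covered neighborhoods, i.e.\ that the pure mapping class group acts on $T(E_n)$ freely and properly discontinuously --- is asserted rather than proved; the appeal to Teichm\"uller's uniqueness theorem does not by itself give proper discontinuity, and a local biholomorphism with discrete fibers need not be a covering map (restrict any covering to a proper open subset of the total space). Freeness does follow from the stabilizer argument, since a conformal automorphism of $\RS$ fixing $0,1,\infty$ (indeed all of $E_n$) is the identity; proper discontinuity requires the isometric action in the Teichm\"uller metric plus a genuine finiteness/discreteness argument, or an explicit citation of Nag's proof. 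As it stands, the heart of the theorem is exactly the part your outline leaves unestablished.
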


 Let $E_n = \{0, 1, \infty, \zeta_1, \cdot \cdot \cdot, \zeta_n\}$
and $E_{n+1} = E \cup \{\zeta_{n+1}\}$ where $\zeta_{n+1} \in \RS \setminus E_n$. We have also a {\hm} universal covering $p_{n+1}: T(E_{n+1}) \to Y_{n+1}$ where
$$Y_{n+1} = \{(z_1, \cdot \cdot \cdot, z_{n+1})\in \C^{n+1}: z_i \not= z_j \mbox { for } i \not= j \mbox { and } z_i \not= 0, 1 \mbox { for all } i = 1, \cdot \cdot \cdot, n+1\}$$

Let $p: T(E_{n+1}) \to T(E_{n})$ denote the forgetful map defined in (\ref{projee}).

We need some preliminaries. The reader is referred to Lemmas 3.1--3.5 in~\cite{JMW} for all details. 
Let $\mathcal{C}(\C)$ denote the complex Banach space of bounded, continuous functions $\phi$ on $\C$ with the norm
$$\Vert\phi\Vert = \sup_{z \in \C}\vert \phi(z)\vert.$$
In \S3 of \cite{JMW} we construct a continuous compact operator 
$$
\mathcal K: \mathcal C(\C) \to \mathcal C(\C).
$$
By Lemma 3.2 in \cite{JMW}, there exists a constant $C_3 > 0$ such that 
$$\Vert \mathcal K\Vert \leq C_3 \mbox { for all } f \in \mathcal C(\C).$$
For $\zeta_{n+1}$, let
$$\mathcal B = \{f \in \mathcal C(\C): \Vert f\Vert \leq \vert \zeta_{n+1}| + C_3\}.$$
It is a bounded convex subset in $\mathcal C(\C)$. The continuous compact operator $\zeta_{n+1} + \mathcal K$ maps $\mathcal B$ into itself. By Schauder fixed point theorem (see Theorem 2A on page 56 of~\cite{Z}; also page 557 of~\cite{JMW}), $\zeta_{n+1} + \mathcal K$ has a fixed point in $\mathcal B$. This says that we can find a $g_{n+1} \in \mathcal B$ such that 
$$
g_{n+1}(z) = \zeta_{n+1} + \mathcal K g_{n+1}(z) \mbox { for all } z \in \C.
$$
The reader is referred to \S3 of \cite{JMW} for all details; especially page 557 of that paper. By Lemma 3.5 of \cite{JMW}, the solution $g_{n+1}(z)$ is the unique fixed point of the operator $\zeta_{n+1} + \mathcal K$. 

Suppose $\phi_n: X \times E_n \to \RS$ is a  {\hm} motion, that has trivial monodromy. By Theorem~\ref{thmA}, it can be extended to a continuous motion $\widetilde{\phi}: X\times \RS\to \RS$. By Theorem~\ref{ceq}, that there exists a basepoint preserving holomorphic map $f_{n}: X\to T(E_{n})$ such that $f_{n}^{*} (\Psi_{E}) =\phi_{n}$. 

\begin{proof}[Proof of Theorem~\ref{thmC}]
Let 
$$
p_{n} : T ( E_{n})\to Y_{n}\quad \hbox{and}\quad  p_{n+1} : T ( E_{n +1}) \to Y_{n +1}
$$
be the two holomorphic coverings in Theorem~\ref{nag}.

Since $X$ is a hyperbolic Riemann surface, its universal cover space is the open unit disk $\Delta$. 
Let $\pi: \Delta\to X$ be the universal cover. Let $\Gamma$ be the group of deck transformations such that $X=\Delta/\Gamma$. 

Define the holomorphic map 
$$
f_{\Delta,n}= f_{n}\circ \pi : \Delta \to T(E_{n})
$$
and for any given $\gamma\in \Gamma$, consider the holomorphic map 
$$
f_{\Delta ,n} \circ \gamma:  \Delta \to T(E_{n}).
$$
This give us two holomorphic maps 
$$
F_{n} = p_{n} \circ f_{\Delta,n} : \Delta\to Y_{n}\quad \hbox{and}\quad F_{n}\circ \gamma = p_{n} \circ ( f_{\Delta ,n} \circ \gamma) : \Delta \to Y_{n}.
$$
Let us write 
$$
F_{n} = (h_{1} , \cdots , h_{n})\quad \hbox{and} \quad F_{n}\circ \gamma = (h_{1} \circ \gamma, \cdots, h_{n}\circ \gamma).
$$ 
Each $h_{i}$ (as well as $h_{i}\circ \gamma$) is holomorphic in $\Delta$. In \S3 of the paper [10], 
we constructed a map $g_{i}$ (as well as a map $g_{i}\circ \gamma$) by using $h_{i}$ (as well as $h_{i}\circ \gamma$) which is holomorphic outside $\Delta$ and continuous on $\C$.  By using $g_{i}$ for all $1\leq i\leq n$, we constructed a continuous compact operator 
$$
\mathcal{K} =\mathcal{K} (F_{n}):  \mathcal{C} (\C)\to \mathcal{C} (\C)
$$ 
in \S 3 of the paper [10]. Similarly, by using $g_{i}\circ \gamma$ for all $1\leq i\leq n$, we have a continuous 
compact operator 
$$
\mathcal{K}_{\gamma} =\mathcal{K} (F_{n}\circ \gamma): \mathcal{C} (\C)\to \mathcal{C} (\C). 
$$

The main point in \S3 of the paper [10] is that we can find the unique fixed point $g_{n+1}$ for $\xi_{n+1}+\mathcal{K}$ and the unique fixed point $g_{n+1, \gamma}$ for $\xi_{n+1}+\mathcal{K}_{\gamma}$. That is,
\begin{equation}~\label{fp1}
g_{n+1}(z) = \xi_{n+1}+ \mathcal{K} g_{n+1}(z)
\end{equation}
and
\begin{equation}~\label{fp2}
g_{n+1, \gamma} (z)= \xi_{n+1}+ \mathcal{K}_{\gamma} g_{n+1,\gamma} (z).
\end{equation}
We also have 
\begin{equation}~\label{fp3}
g_{n+1} \circ \gamma (z)= \xi_{n+1}+ \mathcal{K}_{\gamma} (g_{n+1}\circ \gamma (z)).
\end{equation}

From $g_{n+1}$ (as well as $g_{n+1, \gamma}$), which is holomorphic outside $\Delta$ and continuous in $\C$,  we get a holomorphic map $h_{i}$ (as well as $h_{n+1, \gamma}$) in $\Delta$, for $i=1, \cdots, n$. Then we form two holomorphic maps 
\begin{equation}~\label{nag1}
F_{n+1} = (h_{1} , \cdots , h_{n}, h_{n+1}): \Delta \to Y_{n+1}
\end{equation}
and 
\begin{equation}~\label{nag2}
F_{n+1,\gamma} = (h_{1} \circ \gamma, \cdots, h_{n}\circ \gamma, h_{n+1,\gamma}): \Delta \to Y_{n+1}.
\end{equation}
Since $\Delta$ is simply connected and since $p_{n+1}: T(E_{n+1})\to Y_{n+1}$ is the universal cover, we can lift $F_{n+1}$ and $F_{n+1, \gamma}$ to two two holomorphic maps
$$
f_{\Delta, n+1}: \Delta \to T(E_{n+1})\quad \hbox{and}\quad f_{\Delta, n+1, \gamma}: \Delta \to T(E_{n+1})
$$
such that $p_{n+1}\circ f_{\Delta, n+1}=F_{n+1}$ and 
and  $p_{n+1}\circ f_{\Delta, n+1, \gamma}=F_{n+1, \gamma}$.

Under the assumption of the trivial monodromy, we know that $f_{\Delta, n}=f_{\Delta, n}\circ \gamma$ (see the proof of Theorem A). 
That is, $h_{i}=h_{i}\circ \gamma$ and $g_{i}=g_{i}\circ \gamma$ for all $1\leq i\leq n$.  Thus $\mathcal{K}_{\gamma}=\mathcal{K}$.
Since the fixed point is unique, we get 
$$
g_{n+1}\circ \gamma=g_{n+1, \gamma}=g_{n+1}.
$$ 
This implies that $h_{n+1}\circ \gamma=h_{n+1}$ and 
$F_{n+1}\circ \gamma= F_{n+1}$ and $f_{\Delta, n+1}\circ \gamma=f_{\Delta, n+1}$. 
Since this holds for all $\gamma\in\Gamma$, the map $f_{\Delta, n+1}$ defines a holomorphic map 
$f_{n+1}: X\to T(E_{n+1})$ such that $p\circ f_{n+1} =f_{n}$. Therefore, $f_{n+1}$ is a lift of $f_{n}$.  This completes the proof.  
\end{proof}

\begin{proof}[Proof of Corollary~\ref{exttrivial}]
This follows at once from Theorem~\ref{thmC}, Proposition~\ref{le}, and Theorem~\ref{thmB}.
\end{proof}

\section{Proof of Theorem~\ref{thmD}}

Let $E$ be an arbitrary closed set in $\RS$ such that $0$, $1$, and $\infty$ are in $E$, and let $X$ be a hyperbolic Riemann surface with a basepoint $x_0$. 

\medskip
We will need the following result; see Theorem~\ref{thmC} in~\cite{BJMS}.

\medskip
\begin{theorem}~\label{preext}
Let $\phi: X\times E\to \RS$ be a holomorphic motion. 
Suppose the restriction of $\phi$ to $X\times E'$ extends to 
a holomorphic motion $\widetilde{\phi}: X\times \RS \to \RS$, 
whenever $\{0, 1, \infty\}\subset E'\subset E$ and $E'$ is finite. 
Then $\phi$ can be extended to a holomorphic motion of $\RS$ over $X$.
\end{theorem}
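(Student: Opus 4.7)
The plan is to prove Theorem~\ref{preext} by exhausting $E$ by finite subsets and extracting a limit of the given finite extensions through a normal-families argument.

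Since $E$ is a separable subset of $\RS$, I would choose a sequence $\zeta_1 = 0, \zeta_2 = 1, \zeta_3 = \infty, \zeta_4, \zeta_5, \ldots$ in $E$ such that $F = \{\zeta_n\}_{n \geq 1}$ is dense in $E$, and set $E_n = \{\zeta_1, \ldots, \zeta_n\}$ for $n \geq 3$. The hypothesis furnishes, for each $n \geq 3$, a {\hm} motion $\widehat\phi_n : X \times \RS \to \RS$ extending $\phi|_{X \times E_n}$. The first key step is to control the dilatations of the quasiconformal maps $\widehat\phi_n(x, \cdot)$ uniformly in $n$. Let $\pi : \D \to X$ be the {\hm} universal cover with $\pi(0) = x_0$. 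Each pullback $\pi^{*}(\widehat\phi_n)$ is a {\hm} motion of $\RS$ over $\D$, so the quasiconformal $\lambda$-lemma yields that $\pi^{*}(\widehat\phi_n)(\lambda, \cdot)$ is $K(\lambda)$-quasiconformal with $K(\lambda) = (1 + |\lambda|)/(1 - |\lambda|)$, a bound independent of $n$. Choosing for each $x \in X$ a preimage $\lambda \in \pi^{-1}(x)$ with $|\lambda|$ minimal transfers this to a bound $K(x)$ on $X$ depending only on the hyperbolic distance from $x_0$ to $x$.

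With this uniform bound, for each fixed $x$ the family $\{\widehat\phi_n(x, \cdot)\}_n$ is equicontinuous on $\RS$ in the spherical metric, and for each fixed $z$ the family $\{\widehat\phi_n(\cdot, z)\}_n$ is a normal family of holomorphic maps $X \to \RS$ by Montel's theorem. A diagonal extraction over countable dense subsets of $X$ and of $\RS$, combined with joint equicontinuity, produces a subsequence $\widehat\phi_{n_k}$ converging uniformly on compact subsets of $X \times \RS$ to a continuous map $\widehat\phi : X \times \RS \to \RS$.

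I would then verify the three properties of Definition~\ref{hm} for $\widehat\phi$. Property (1) is immediate since $\widehat\phi_n(x_0, z) = z$ for every $n$. Property (3), holomorphicity of $\widehat\phi(\cdot, z)$, follows because a uniform limit of holomorphic maps into $\RS$ is holomorphic. Property (2), the injectivity of $\widehat\phi(x, \cdot)$, follows from the classical fact that a uniform limit of normalized $K$-quasiconformal self-maps of $\RS$ is itself a $K$-quasiconformal homeomorphism. Finally, for each $\zeta_m \in F$ and every $n \geq m$ one has $\widehat\phi_n(x, \zeta_m) = \phi(x, \zeta_m)$, so $\widehat\phi = \phi$ on $X \times F$; density of $F$ in the closed set $E$ combined with the continuity in $z$ just established yields $\widehat\phi = \phi$ on all of $X \times E$. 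The main obstacle is the uniform dilatation bound in the first step; without it the limit could degenerate and fail to be injective. Hyperbolicity of $X$ is decisive here, since it guarantees that $\pi^{-1}(x)$ lies at bounded hyperbolic distance from $0 \in \D$, so that the quantitative $\lambda$-lemma provides the required control on compact subsets of $X$.
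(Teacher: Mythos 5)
Your argument is correct in outline, but note that the paper you are being compared against does not actually prove this statement: it imports it verbatim as Theorem C of the reference \cite{BJMS}, and the normal-families proof you give is essentially the one carried out there (resting on the normality results of \cite{BJM} and the quantitative $\lambda$-lemma bound $K(\lambda)=(1+|\lambda|)/(1-|\lambda|)$ after lifting to the universal cover $\Delta$). Two details deserve to be made explicit. First, for the Arzel\`a--Ascoli extraction you invoke ``joint equicontinuity'' but only establish equicontinuity in $z$ for fixed $x$ and normality in $x$ for fixed $z$; you should add that the two-parameter family $\{x\mapsto\widehat\phi_n(x,z)\}_{n,\,z\notin\{0,1,\infty\}}$ omits $0,1,\infty$ and is therefore spherically equicontinuous on compacta by Montel--Carath\'eodory, uniformly in $z$ and $n$, which together with the uniform $K(x)$-quasiconformality in $z$ gives the joint statement. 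Second, the final identification $\widehat\phi=\phi$ on $X\times E$ needs not only the continuity of $\widehat\phi(x,\cdot)$ but also the continuity of $\phi(x,\cdot)$ on the closed set $E$; this is supplied by the extended $\lambda$-lemma (the fact, recalled in Section 1 of the paper, that a holomorphic motion of a set extends continuously to its closure). With these standard points filled in, the proof is complete.
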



\begin{proof}[Proof of Theorem~\ref{thmD}]  
Let $\phi: X \times E \to \RS$ be a {\hm} motion with the property that $\phi$ restricted to $X \times E'$ has trivial monodromy, 
where $E'$ is any finite subset of $E$, containing $0$, $1$, and $\infty$.

Fix some $E'\subset E$ such that $E'$ contains the points $0,1, \infty$ and $\phi$ restricted to $X\times E'$ has trivial mondromy. 
Let $E_{0}=E'$. Consider $E_{1} =E_{0} \cup \{\zeta_{1}\}$ for $\zeta_{1}\not\in E_{0}$.
Inductively, consider $E_{n+1}=E_{n}\cup \{\zeta_{n+1}\}$ for $\zeta_{n+1}\not\in E_{n}$ for all $n\geq 0$. Then we eventually get a countable set $E_{\infty}=\cup_{n=0}^{\infty} E_{n}$ in $\RS$. We can assume that $E_{\infty}$ is dense in $\RS$.  

Define $\phi_{0}: =\phi$ restricted to $X\times E_{0}$. Using Theorem~\ref{thmC} and Corollary~\ref{exttrivial} inductively, we know that the {\hm} motion $\phi_{n}: X\times E_{n}\to \RS$ can be extended to a {\hm} motion $\phi_{n+1}: X\times E_{n+1}\to \RS$ with trivial monodromy for all $n\geq 0$. 
Thus we can extend $\phi_{0}$ to a {\hm} $\phi_{\infty}: X\times E_{\infty}\to\RS$. Since $E_{\infty}$ is dense in $\RS$, it can be further extended to a {\hm} motion $\widetilde{\phi}: X\times \RS\to\RS$. The conclusion now follows from Theorem~\ref{preext}.
\end{proof}

\medskip
\begin{remark}  Let $G$ be a group of M\"obius transformations, such that the closed
set $E$ is $G$-invariant. Let $\phi : X \times E \to \RS$ be a $G$-equivariant holomorphic motion;
see Definition~\ref{ge}. If $\phi$ has the property that $\phi$ restricted to $X\times E'$ has trivial
monodromy, where $E'$ is any subset of E, containing the points $0, 1$, and $\infty$, then $\phi$
can be extended to a holomorphic motion $\widetilde{\phi} : X\times \RS\to \RS$ which is also $G$-equivariant.
The proof is very similar to the proof of Theorem 1 in~\cite{EKK} or of Theorem B in~\cite{BJM}.
\end{remark}

\bibliographystyle{amsalpha}

\end{document}